\numberwithin{equation}{section}
\newcommand{\rev}[1]{{{#1}}}
\renewcommand{\Omega}{\om}
\newcommand{\veps}{\varepsilon}
\newcommand{\divx}{\mathrm{div}_x}
\newcommand{\vPi}{\varPi}
\newcommand{\vPih}{\hat{\varPi}}
\newcommand{\vSig}{\varSigma}
\newcommand{\hK}{\hat K}
\newcommand{\ran}{\mathop{\mathrm{ran}}}
\newcommand{\Vsp}{\rprp{(V^*)}}
\newcommand{\rprp}[1]{{#1}^\perp}
\newcommand{\lprp}[1]{\sideset{^\perp}{}{\mathop{#1}}}
\newcommand{\ip}[1]{\langle {#1} \rangle}
\newcommand{\Gt}{\tilde{G}}
\newcommand{\vt}{\tilde{v}}
\newcommand{\ut}{\tilde{u}}
\newcommand{\wt}{\tilde{w}}
\newcommand{\vG}{\varGamma}
\newcommand{\om}{\varOmega}
\newcommand{\oh}{\varOmega_h}
\newcommand{\eh}{\mathcal{E}_h}
\newcommand{\og}{\omega}
\newcommand{\D}{\mathcal{D}}
\newcommand{\V}{\mathcal{V}}
\newcommand{\CCC}{\mathbb{C}}
\def\d{\partial}
\newcommand{\dom}{\mathop{\mathrm{dom}}}
\newcommand{\dt}{\partial_t }
\newcommand{\dxx}{\partial_{xx} }
\newtheorem{theorem}{Theorem}[section]
\newtheorem{lemma}[theorem]{Lemma}
\theoremstyle{definition}
\theoremstyle{remark}
\newtheorem{assumption}{Assumption}
\newtheorem{remark}[theorem]{Remark}
\newtheorem{problem}[theorem]{Problem}
\numberwithin{equation}{section}
\newcommand{\diam}{\mathop{\mathrm{diam}}}
\newcommand{\RRR}{\mathbb{R}}
\begin{document}

\title{A spacetime DPG method for the Schr\"odinger equation}

\author[L. Demkowicz]{L.~Demkowicz}
\address{Institute for Computational Engineering and Sciences,
  The University of Texas at Austin, Austin, TX 78712, USA}
\email{leszek@ices.utexas.edu}

\author{J.~Gopalakrishnan}
\address{Portland State University,  PO Box 751, Portland, OR 97207-0751}\email{gjay@pdx.edu}

\author{S.~Nagaraj}
\address{Institute for Computational Engineering and Sciences,
  The University of Texas at Austin, Austin, TX 78712, USA}
\email{sriram@ices.utexas.edu}

\author{P. Sep\'{u}lveda}
\address{Portland State University,  PO Box 751, Portland, OR 97207-0751}\email{spaulina@pdx.edu}

\thanks{Corresponding author: Sriram Nagaraj (sriram@ices.utexas.edu)}
\thanks{This work was partly supported by \rev{AFOSR (FA9550-17-1-0090)}, NSF (DMS-1418822 and DMS-1318916) \rev{and} ONR (N00014-15-1-2496)}

\begin{abstract}
A spacetime discontinuous Petrov-Galerkin (DPG) method for the linear time-dependent Schr\"odinger equation is proposed. The spacetime approach is particularly attractive for capturing irregular solutions. Motivated by the fact that some irregular Schr\"odinger solutions cannot be solutions of certain first order reformulations, the proposed spacetime method uses the second order Schr\"odinger operator. Two variational formulations are proved to be well posed: a strong formulation (with no relaxation of the original equation) and a weak formulation (also called the ``ultraweak formulation'', which transfers all derivatives onto test functions). The convergence of the DPG method based on the ultraweak formulation is investigated using an interpolation operator. A stand-alone appendix analyzes the ultraweak formulation for general differential operators. Reports of numerical experiments motivated by pulse propagation in dispersive optical fibers are also included.
\end{abstract}

\maketitle

\section{Introduction}

This paper is devoted to exploring a weak formulation and an
accompanying numerical technique for the Schr\"odinger equation with
Dirichlet boundary conditions.  Let $\Omega_0 \subset \mathbb{R}^n$
$(n\ge 1)$ be an open bounded domain with Lipschitz boundary.  The
space variable $x$ lies in $\om_0$ while the time variable $t$ lies in
the open interval $(0,T)$ with $T < \infty$.  The classical form of
Schr\"odinger initial boundary value problem reads as follows:
\begin{subequations}
  \label{eq:ls2}
    \begin{align}
      \label{eq:ls2pde}
    i \d_t u - \Delta_x u &= f,
     && \quad x \in \om_0,\;  0< t < T,
     \\
     u(x,t) &= 0,
     && \quad x \in \d\om_0, \; 0< t < T,
     \\
     u(x,0) &= 0,      && \quad x \in \om_0,
    \end{align}
\end{subequations}
where $\d_t$ denotes the time derivative $\d/\d t$ and 
$\Delta_x$ denotes the Laplacian with respect to the spatial
variable~$x$.  Here $f$ is any given function in $ L^2(\Omega)$ and
$\om = \om_0 \times (0,T)$ throughout.

The numerical technique we want to apply to~\eqref{eq:ls2} is the
discontinuous Petrov-Galerkin (DPG) method~\cite{DemkoGopal11}.  Among
its desirable properties are mesh-independent stability, inheritance
of discrete stability from the well-posedness of the undiscretized
problem, and the availability of a canonical error indicator computed
as part of the solution. The DPG method has been successfully applied
to a wide variety of problems such as second order elliptic
problems~\cite{dpg1}, convective
phenomena~\cite{dahmen,DemkoGopal10a,DemkoHeuer13},
elasticity~\cite{BramwDemkoGopal12,CarstDemkoGopal14,jay1,elas},
Stokes flow~\cite{CarstDemkoGopal14,stokes14}, and spacetime
problems~\cite{spacetime2, spacetime1, Wiene16}.  It seems natural
therefore that the DPG method should work for~\eqref{eq:ls2} as well.
In this paper, we will show that the DPG method does indeed
faithfully approximate the solutions of~\eqref{eq:ls2} {\em provided}
we do not recast~\eqref{eq:ls2} into a first order system.

Many applications of interest come  a first order systems even if
they are often displayed as second order partial differential
equations. For example, the second order heat equation is really a
combination of two first order equations, namely the Fourier law of
heat conduction and the conservation of energy. Similarly, the linear
elasticity equation, while often displayed as a second order equation
for displacement, is really a combination of two first order
equations, the constitutive (Hooke's) law and the equation of static
equilibrium.  Thus it's no surprise that it makes physical sense to
return the heat equation or the elasticity equation to first order
form before discretizing.  However, it makes no physical sense to do
this for the Schr\"odinger equation, which is not derived from first
order physical laws.

It makes no mathematical sense either. One might be tempted to
introduce a ``flux'' $\tau,$ formulate the first order system
$ i \d_t u - \divx \tau = f$ and $\nabla_x u - \tau = g,$ and claim
the latter's equivalence to~\eqref{eq:ls2} when $g=0.$ This claim is false
because, while the Schr\"odinger problem~\eqref{eq:ls2} is well-posed
for $f \in L^2(\om)$, {\em the first order system cannot be well-posed
  in $L^2(\om).$} Indeed, denoting the norm of $L^2(\om)$ by
$\| \cdot \|_\om$, if the first order system were well-posed, then
there would be constants $C_1,C_2>0$ such that
$ \| u\|_\om + \| \tau \|_\om \leq C_1\|f\|_\om + C_2\|g\|_\om.$ But
then the second equation of the system implies that
$\| \nabla_x u \|_\om = \| g + \tau \|_\om\le C_1\|f\|_\om +
2C_2\|g\|_\om$ for any solution~$u$, which is false: In the next two paragraphs
we will exhibit a Schr\"odinger
solution for which $\| \nabla_x u \|_\om = \infty$ even when $g=0$ and
$f \in L^2(\om)$.

First observe that given any $f(x,t)$ in $L^2(\om)$, it is possible to
solve~\eqref{eq:ls2} by the ``method of Galerkin
approximations''~\cite{Evans98} (distinct from the Galerkin finite
element method).  Let $e_k(x)$ in $H_0^1(\om_0)$ and $\og_k^2>0$ be an
eigenpair of $\Delta_x$ satisfying
\begin{equation} \label{eigenproblem}
-\Delta_x e_k
=\omega_k^2 e_k\qquad   \text {  a.e. in } \om_0,
\end{equation} 
normalized so that $\|e_k\|_{\om_0} = 1$ for all
natural numbers $k\ge 1$.
Since Fubini's theorem for product measures implies that $f(\cdot, t)$
is in $L^2(\om_0)$, the following definitions make sense:
\begin{subequations}
  \label{eq:fkukFMUM}
\begin{align}
  f_k(t) & 
           = \int_{\om_0} f(x,t) \bar{e}_k(x) \; dx,
  &
    u_k(t) & = -i \int_0^t e^{i\og_k^2 (t-s) } f_k(s)\; ds,
  \\
  F_M(x,t)  &= \sum_{k=1}^{M}f_k(t) e_k(x),
  &
    U_{M}(x,t)  &= \sum_{k=1}^{M}u_k(t) e_k(x).
\end{align}
\end{subequations}
It is not difficult to show (see the proof of Theorem~\ref{thm:strong}
below) that $u = \lim_{M \to \infty} U_M$
exists in $L^2(\om)$ and solves~\eqref{eq:ls2}. 

Now consider the one-dimensional case $\om_0 = (0,1)$, where $\og_k = k
\pi$, and choose
\[
f(x,t) = \sum_{k=1}^\infty 
\frac{1}{k} e^{i \omega_k^2 t} e_k(x) \quad\text{ in } L^2(\om).
\]
Then by the orthonormality of $e_k$, we have that
$f_k(s) = e^{i \omega_k^2 s}/k$, 
$u_k(t) = -i t e^{i \og_k^2 t}/k$,
\begin{align*}
\| U_M \|_{\om}^{2} 
  & =  \sum_{k=1} ^M \int_0 ^T |u_k(t)|^2\, dt
  = \sum_{k = 1} ^M \int_0 ^T\left| \frac{ -i t}{k}e^{i\omega_k ^2
    t}\right|^2 dt
    =\frac{T^3}{3} \sum_{k=1}^M \frac{1}{k^2},
  \\
  \| \nabla_x U_M\|^2_\om
  & = \sum_{k = 1}^M \og_k^2\int_0 ^T \left| u_k(t)\right|^2\, dt 
    =   T^3\sum_{k=1}^M  \frac{\omega_k ^2}{3k^2} 
    =   \frac{\pi^2 }{3} T^3 M.  \label{imposible}
\end{align*}
The solution $u$ is the limit of $U_M$. The above calculations clearly
show that as $M\to \infty$, while
$\| u\|_\om = \lim_{M\to \infty} \| U_M \|_{\om} = (T^3\pi^2/18)^{\frac{1}{2}}$, the
limit of $\| \nabla_x U_M\|_\om$ diverges. {\em Thus it is possible to
  obtain a Schr\"odinger solution $u$ whose $H^1$-norm is infinite
  even when $f \in L^2(\om)$.}  Note that finer arguments are needed
to understand the regularity of Schr\"odinger solutions in unbounded
domains, which although a topic of wide mathematical
interest~\cite{Tao06}, is not our concern here.

To our knowledge, this paper is the first work to analyze the
feasibility of the DPG methodology for a system \rev{without ready access
to an equivalent  
first order formulation.} The second order form necessitates
formulations in the nonstandard graph spaces of the second order
Schr\"odinger operator.  One of the contributions of this paper is the
proof of well-posedness of a strong and a weak formulation
of~\eqref{eq:ls2} in these graph spaces.  The general spaces and
arguments required for this analysis are collected in a stand-alone appendix
(Appendix~\ref{apn:weak}), anticipating uses outside of the Schr\"odinger
example. The analysis in Appendix~\ref{apn:weak} is motivated by the
modern theory of Friedrichs systems~\cite{guermond1} but applies
beyond Friedrichs systems.  Borrowing the approach
of~\cite{guermond1}, we are able to prove well-posedness without
developing a trace theory for the graph spaces.  The other
contributions involve the numerical implications of this
well-posedness.  Numerical methods using the strong formulation must
use conforming finite element subspaces of the graph spaces. On the
other hand, numerical methods using the weak formulation need only use
existing standard finite element spaces. In either case, an
interpolation theory in the Schr\"odinger graph norm is needed to
estimate convergence rates. We address this issue in one space
dimension.

In the next section, we investigate well-posedness (in the sense of
Hadamard) for a strong and weak variational formulation for the
Schr\"odinger problem.  This will require an abstract definition of a
boundary operator and duality pairings in a graph space. Such abstract
definitions that apply beyond the Schr\"odinger setting are in
Appendix~\ref{apn:weak}. Their particular realizations for the
Schr\"odinger case are used in section~\ref{sec:Schrodinger}. (To avoid
repetitions of the general definitions in the specific case, we will
often refer to Appendix~\ref{apn:weak} in
section~\ref{sec:Schrodinger}.)  Section~\ref{sec:dense} provides a verification of a
density assumption made in section~\ref{sec:Schrodinger}.  Section~\ref{sec:error} details our
construction of a conforming finite element space and interpolation
error estimates. Section~\ref{sec:results} points to an application in
dispersive optical fibers and contains some numerical results.

\section{Functional Setting and Wellposedness} \label{sec:Schrodinger}

We now provide a functional setting within which a strong and a weak
formulation of the spacetime Schr\"odinger problem can be proved to be
well posed (i.e., \emph{inf-sup} stable). The analysis is an application
of the general theory detailed in Appendix~\ref{apn:weak}.

The classical form of the problem is already presented
in~\eqref{eq:ls2}.  Recalling that $\Omega=\Omega_0 \times (0,T)$,
define these parts of $\d\om$:
\[
\vG = \partial \Omega_0 \times [0,T] \cup \Omega_0 \times
\{0\}, \qquad 
\vG^* = \partial \Omega_0 \times [0,T] \cup \Omega_0 \times
\{T\}
\]
(see Figure~\ref{fig:domain}).  Then the initial and boundary conditions
together can be written as $u|_\vG =0$.  We want to
write~\eqref{eq:ls2} as an operator equation (see~\eqref{eq:bvp}) to
apply the general results of Appendix~\ref{apn:weak}. To this end,
consider the setting of Appendix~\ref{apn:weak} with
\begin{gather*}
  A =  A^* =  i \partial_t - \Delta_x ,
  \qquad k=l=m=1, \; d=n+1.
\end{gather*}
The space $W = W^*$ is then defined by~\eqref{eq:W}--\eqref{eq:W*}; \rev{namely,
$W = W^* = \{ u \in L^2(\om): \; i\d_t u - \Delta_x u \in L^2(\om) \}$.
The operator $D=D^*: W \to W'$} is defined
by~\eqref{eq:D}--\eqref{eq:D*}; namely,
\rev{
$  \ip{ Dw, \wt}_{W} = (Aw, \wt)_\om - (w, A\wt)_\om$ for all $w, \wt \in W.$}
As usual, let $\D(\bar \om)$ denote
the restrictions of functions from $\D(\RRR^{n+1})$ to $\om$.  The operator $D$
(often called the ``boundary operator'' in the theory of Friedrichs
systems~\cite{guermond1}), satisfies
\begin{equation}
  \label{eq:Dsmoooth}
  \ip{D \phi, \psi}_W
  =\int_{\d \om} i n_t \phi \bar \psi
    + \int_{\d\om} \phi (n_x \cdot \nabla_x \bar \psi)
    - \int_{\d\om} (n_x \cdot \nabla_x \phi) \bar \psi
\end{equation}
for all $\phi, \psi \in \D(\bar \om)$.  Note that \rev{although} 
the integrals on the
right-hand side \rev{need not} exist for all functions in $W$, \rev{$D$ 
is} defined on all $W$ through~\eqref{eq:D}.

Although we set the differential operators $A$ and $A^*$ to be equal
above, note that we consider each as an unbounded operator with its
own domain.  We set the domain of $A$ to
\begin{equation} \label{eq:domdef}
\dom(A) =
\{u \in W : \langle Dv, u \rangle_{W} = 0, \, 
\forall v \in \mathcal{V}^*\}, 
\end{equation}
where
$
\V^* = \{ \varphi \in \D(\bar\om): \; 
\varphi|_{\vG^*} =0
\}.
$
The domain of the adjoint is given by the
usual~\cite{Brezi11,OdenDemko10} general prescription:
$\dom(A^*) = \{ s \in L^2(\om)^l:$  $\exists\,\ell \in L^2(\om)^m $
such that $(Av, s)_\om = (v, \ell)_\om$  $\forall v \in \dom(A)\}.$
Here $(\cdot,\cdot)_\om$ denotes the (complex) inner product in
$L^2(\om)$.  Finally, as in the appendix, set $V=\dom(A)$ and
$V^*=\dom(A^*)$ with the understanding that both $V$ and $V^*$ are
endowed with the $W$-topology, while $\dom(A)$ and $\dom(A^*)$ have
the topology of $L^2(\om)$.

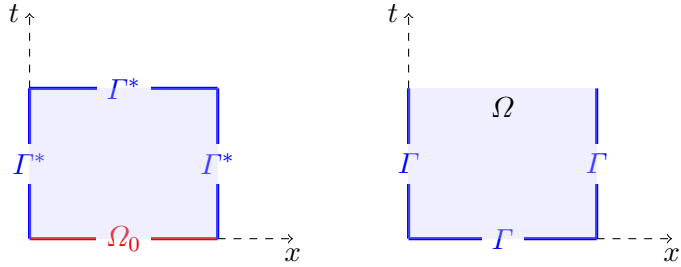
\begin{figure}
\begin{center}

\begin{tikzpicture}
\coordinate (top) at  (0,2);
\coordinate (bot) at  (2.5,0);
\coordinate (lft) at  (0,0);
\coordinate (rgt) at  (2.5,2);

\draw [red!90!black, very thick]
  (lft)  -- node[midway, fill=white] {$\om_0$} (bot) ;
\draw [blue, very thick] 
  (top)  -- node[midway, fill=white] {$\vG^*$} (rgt) ;
\draw [blue, very thick] 
  (rgt)  -- node[midway, fill=white] {$\vG^*$} (bot) ;
\draw [blue, very thick] 
  (top)  -- node[midway, fill=white] {$\vG^*$} (lft) ;

\fill [fill=blue!20, fill opacity=0.3] 
(bot) -- (rgt) -- (top) --(lft) ;
\draw[->,dashed] (2.5,0) -- ($(3.5,0)$) node[below] {$x$};
\draw[->,dashed] (0,2) -- ($(0,3)$) node[left] {$t$};
\end{tikzpicture}
\qquad
\begin{tikzpicture}
\coordinate (top) at  (0,2);
\coordinate (bot) at  (2.5,0);
\coordinate (lft) at  (0,0);
\coordinate (rgt) at  (2.5,2);


\draw [blue, very thick] 
  (rgt)  -- node[midway, fill=white] {$\vG$} (bot) ;
\draw [blue, very thick] 
  (top)  -- node[midway, fill=white] {$\vG$} (lft) ;
\draw [blue, very thick] 
  (lft)  -- node[midway, fill=white] {$\vG$} (bot) ;

\fill [fill=blue!20, fill opacity=0.3] 
(bot) -- (rgt) -- (top) --(lft) ;

\node at (1.25,1.75) {$\om$};
\draw[->,dashed] (2.5,0) -- ($(3.5,0)$) node[below] {$x$};
\draw[->,dashed] (0,2) -- ($(0,3)$) node[left] {$t$};
\end{tikzpicture}
 
\caption{Schematic of the spacetime domain}
\label{fig:domain}
\end{center}
\end{figure}

\rev{For the above set $A, A^*$ and $\dom(A)$, 
the conditions~\eqref{eq:A1} and \eqref{eq:A3}
in Appendix~\ref{apn:weak} are immediate, while 
condition~\eqref{eq:A2} is easily verified using~\eqref{eq:Dsmoooth}.}
Hence Lemma~\ref{lem:V1} shows
that $\dom(A^*)$ equals
\begin{equation}
  \label{eq:V*DV}
  V^* = \lprp{ D(V) }.  
\end{equation}
Rewriting~\eqref{eq:domdef} in the same style, 
\begin{equation}
  \label{eq:domA}
  V = \lprp{ D(\V^*)}.  
\end{equation}
Thus $V$ and $V^*$ are closed subspaces of $W$.  Let
$\V = \{ \varphi \in \D(\bar\om): \; \varphi|_{\vG} = 0\}$.

\begin{lemma}   \label{lem:VVsubset}
  $\V \subset V$ and $\V^* \subset V^*$.
\end{lemma}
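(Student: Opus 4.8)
The plan is to prove the two inclusions by different means, the asymmetry being forced by the different natures of the test sets in \eqref{eq:domA} and \eqref{eq:V*DV}. For $\V \subset V$ I would use \eqref{eq:domA}, $V = \lprp{D(\V^*)}$, so that it suffices to show $\ip{Dv,\varphi}_W = 0$ for every $\varphi \in \V$ and $v \in \V^*$. Since both $\varphi$ and $v$ lie in $\D(\bar\om)$, I can evaluate this pairing by the boundary formula \eqref{eq:Dsmoooth} and split $\d\om$ into its three faces: the top $\om_0\times\{T\}$ (with $n_x=0$, $n_t=1$), the bottom $\om_0\times\{0\}$ (with $n_x=0$, $n_t=-1$), and the lateral boundary $\d\om_0\times[0,T]$ (with $n_t=0$, which is exactly $\vG\cap\vG^*$). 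On the top the two spatial integrals vanish because $n_x=0$ and the temporal one because $v=0$ there; on the bottom the spatial integrals again vanish and the temporal one because $\varphi=0$ there; on the lateral boundary the temporal integral vanishes since $n_t=0$ and the remaining two because both $v$ and $\varphi$ vanish there. Hence $\ip{Dv,\varphi}_W=0$ and $\varphi \in V$.

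For $\V^* \subset V^*$ the analogous computation is unavailable: \eqref{eq:V*DV} tests membership against all of $V$, whose elements need not be smooth, so \eqref{eq:Dsmoooth} does not apply. I would instead exploit the symmetry encoded in the definition of $D$. Since $A=A^*$, conjugating and relabeling the identity $\ip{Dw,\wt}_W = (Aw,\wt)_\om - (w,A\wt)_\om$ gives $\ip{Dv,\psi}_W = -\overline{\ip{D\psi,v}_W}$ for all $v,\psi \in W$. Now fix $\psi \in \V^*$ and an arbitrary $v \in V$. By \eqref{eq:domA}, $v \in \lprp{D(\V^*)}$ means $\ip{D\phi,v}_W=0$ for all $\phi \in \V^*$; taking $\phi=\psi$ gives $\ip{D\psi,v}_W=0$, and the symmetry then yields $\ip{Dv,\psi}_W=0$. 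Since $v\in V$ was arbitrary, $\psi \in \lprp{D(V)} = V^*$.

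The only real obstacle is the asymmetry just isolated: the inclusion into $V$ reduces to an explicit boundary-integral computation because the test functions $\V^*$ are smooth, whereas the inclusion into $V^*$ has non-smooth test functions and must instead be routed through the Hermitian symmetry $\ip{Dv,\psi}_W=-\overline{\ip{D\psi,v}_W}$ back to the definition of $V$. The remainder is routine bookkeeping of which faces of $\d\om$ lie in $\vG$ and in $\vG^*$.
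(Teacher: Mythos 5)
Your proof is correct and follows essentially the same route as the paper: $\V \subset V$ is obtained from \eqref{eq:domA} together with the boundary formula \eqref{eq:Dsmoooth} (which you verify face by face, where the paper simply asserts the vanishing), and $\V^* \subset V^*$ follows from the definition of $V$ in \eqref{eq:domdef}/\eqref{eq:domA} combined with \eqref{eq:V*DV}. The only substantive difference is that you make explicit the antisymmetry $\ip{Dv,\psi}_W = -\overline{\ip{D\psi,v}_W}$ needed to pass between the two orderings of the pairing, a step the paper's proof uses implicitly.
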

\begin{proof}
  Equation~\eqref{eq:Dsmoooth} implies $\ip{D \phi,\phi^*}_W =0$ for
  all $\phi \in V$ and $\phi^* \in \V^*.$ Hence, any $\phi \in \V$ is
  also in $\lprp{D(\V^*)},$ which by~\eqref{eq:domA} implies that
  $\phi \in V$. Thus $\V \subset V$.

  If $\phi^* \in \V^*$, then~\eqref{eq:domdef} shows that
  $\ip{ D \phi^*, u}_W=0$ for all $u \in V$, i.e.,
  $\phi^* \in \lprp{D(V)},$ which by~\eqref{eq:V*DV} implies $\phi^*$
  is in $V^*$.
\end{proof}

In~\eqref{eq:domA}, $\V^*$ may be replaced by $V^*,$ provided that a density
result is available, as we show next. 

\begin{assumption} \label{asm:dense}
  Suppose that $\V^*$ is dense in $V^*$ and that $\V$ is dense in $V$.
\end{assumption}
\begin{lemma} \label{lem:VDV*}
  If Assumption~\ref{asm:dense} holds, then 
  $
  \lprp{D(\V^*)} = \lprp{D(V^*)}
  $
  and 
  $\lprp{D(\V)} = \lprp{D(V)}.$
\end{lemma}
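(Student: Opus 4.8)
The plan is to prove each of the two set equalities by verifying the two inclusions separately. In both cases one inclusion is immediate from the monotonicity of the annihilator, while the reverse inclusion is exactly where Assumption~\ref{asm:dense} is used, combined with the continuity of a certain functional on $W$.

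For the first identity, I would first record the trivial inclusion. Since $\V^* \subset V^*$ by Lemma~\ref{lem:VVsubset}, we have $D(\V^*) \subset D(V^*)$, and because $\lprp{(\cdot)}$ reverses inclusions, $\lprp{D(V^*)} \subset \lprp{D(\V^*)}$. For the reverse inclusion, fix $w \in \lprp{D(\V^*)}$, so that $\ip{Dv, w}_W = 0$ for every $v \in \V^*$. Given an arbitrary $v \in V^*$, Assumption~\ref{asm:dense} furnishes a sequence $v_n \in \V^*$ with $v_n \to v$ in the $W$-topology. The crux is that the functional $v \mapsto \ip{Dv, w}_W$ is continuous on $W$: by the definition of $D$ we have $\ip{Dv, w}_W = (Av, w)_\om - (v, Aw)_\om$, and since the graph norm of $W$ dominates both $\|v\|_\om$ and $\|Av\|_\om$, the Cauchy--Schwarz inequality yields $|\ip{Dv, w}_W| \le \|Av\|_\om \|w\|_\om + \|v\|_\om \|Aw\|_\om \le (\|w\|_\om + \|Aw\|_\om)\,\|v\|_W$. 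Passing to the limit then gives $\ip{Dv, w}_W = \lim_n \ip{Dv_n, w}_W = 0$, so $w \in \lprp{D(V^*)}$. This establishes $\lprp{D(\V^*)} = \lprp{D(V^*)}$.

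The second identity $\lprp{D(\V)} = \lprp{D(V)}$ follows by the identical argument, now invoking $\V \subset V$ from Lemma~\ref{lem:VVsubset} and the density of $\V$ in $V$ from Assumption~\ref{asm:dense}; the continuity bound for $v \mapsto \ip{Dv, w}_W$ is unchanged since it used nothing specific to $\V^*$ or $V^*$.

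I do not anticipate a genuine obstacle. The only point deserving care is that the functional $v \mapsto \ip{Dv, w}_W$ must be continuous in the $W$-topology rather than merely in $L^2(\om)$, as the approximating sequence converges only in the graph norm; this is precisely where the graph-norm definition of $W$ and the explicit bilinear formula for $D$ enter. Everything else is routine bookkeeping with annihilators and limits.
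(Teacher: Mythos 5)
Your proposal is correct and follows essentially the same route as the paper: the easy inclusion from monotonicity of the annihilator, then density plus continuity of $v \mapsto \ip{Dv,w}_W$ to pass to the limit. The only difference is cosmetic — the paper simply invokes "the continuity of $D$" (already known since $D: W \to W'$ is bounded), whereas you verify that continuity explicitly via Cauchy--Schwarz, which is a fine elaboration of the same step.
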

\begin{proof}
  Clearly $\V^* \subseteq V^*$ implies
  $ \lprp{D(\V^*)} \supseteq \lprp{D(V^*)}.$ To prove the reverse
  containment, suppose $ w \in \lprp{D(\V^*)}$ and $v^* \in V^*$. By
  density, there is a sequence $\vt_n$ in $\V^*$ satisfying
  $\lim_{n\to \infty} \| \vt_n - v^*\|_W =0$. Since
  $\ip{ D \vt_n, w}_W=0,$ by the continuity of $D$, we have
  $\ip{ D v^*, w}_W=0.$ Hence $w \in \lprp{D (V^*)}$. The second
  identity is proved similarly.
\end{proof}

\subsection{Strong formulation}

The strong formulation of the Schr\"odinger problem~\eqref{eq:ls2} is
based on these sesquilinear and conjugate linear forms:
\[
a(u,v) =(Au,v)_\Omega, \qquad 
l(v) = (f,v)_\Omega.
\]
\begin{problem}[Strong formulation] \label{prb:strong}
  {\em Given any $f \in L^2(\om)$, find 
  $u \in V$ satisfying 
  \[
  a(u,v) = l(v), \qquad\, \forall v \in L^2(\om).
  \]}
\end{problem}

\begin{theorem}
  \label{thm:strong}
  Suppose that Assumption~\ref{asm:dense} holds. Then the linear Schr{\"o}dinger
  operator $A: V \rightarrow L^2(\Omega)$ is a continuous
  bijection. Hence Problem~\ref{prb:strong} is well-posed.
\end{theorem}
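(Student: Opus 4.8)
The plan is to prove that $A:V\to L^2(\om)$ is a continuous bijection and then invoke the bounded inverse theorem: since $V$ is a closed subspace of the Hilbert space $W$ (as noted right after \eqref{eq:domA}), it is complete, so a continuous bijection automatically has a bounded inverse, which is exactly the well-posedness of Problem~\ref{prb:strong}. Continuity is immediate, since the graph norm gives $\|Au\|_\om\le\|u\|_W$. Hence the whole content is the bijectivity, which I would establish concretely through the spectral decomposition already set up in \eqref{eigenproblem}--\eqref{eq:fkukFMUM}.

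The central device is to pass between $u\in V$ and its coefficients $u_k(t)=(u(\cdot,t),e_k)_{\om_0}$ in the Dirichlet eigenbasis $\{e_k\}$ of $-\Delta_x$, for which Parseval gives $\|u\|_\om^2=\sum_k\|u_k\|_{L^2(0,T)}^2$. The first key step is a modal reformulation of membership in $V$. If $u\in W$ with $Au=g\in L^2(\om)$, then projecting $g$ onto $e_k$ shows $i u_k'+\omega_k^2 u_k=g_k$, so each $u_k\in H^1(0,T)$ and the endpoint values $u_k(0),u_k(T)$ make sense even with no trace theory for $W$. Testing the weak membership condition $u\in\lprp{D(\V^*)}$ from \eqref{eq:domA} against $v\in\V^*$ and integrating by parts in time mode-by-mode — using that $v$ vanishes on $\vG^*$, hence $\hat v_k(T)=0$, and that $e_k$ vanishes on $\d\om_0$ so the spatial Green's identity carries no boundary term — collapses $\ip{Dv,u}_W$ to $-i\sum_k\overline{u_k(0)}\,\hat v_k(0)$. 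Since the traces $\hat v_k(0)$ range over a dense set of $L^2(\om_0)$ as $v$ varies in $\V^*$, the condition $\ip{Dv,u}_W=0$ for all $v\in\V^*$ is equivalent to $u_k(0)=0$ for every $k$. I expect this translation — turning the abstract annihilator definition of $V$ into concrete vanishing initial traces of the modes — to be the main obstacle, since it must be carried out purely through the pairing $\ip{D\cdot,\cdot}_W$ of \eqref{eq:Dsmoooth} and a careful interchange-of-summation/density argument rather than a trace operator.

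With this in hand, injectivity is short: if $Au=0$ then $g_k=0$, so $u_k'=i\omega_k^2 u_k$, and together with $u_k(0)=0$ the ODE uniqueness forces $u_k\equiv0$, whence $u=0$. For surjectivity I would take any $f\in L^2(\om)$, form $f_k,u_k,U_M,F_M$ exactly as in \eqref{eq:fkukFMUM}, and estimate the Duhamel integral by Cauchy--Schwarz to get $\|u_k\|_{L^2(0,T)}\le T\|f_k\|_{L^2(0,T)}$, so that $\sum_k\|u_k\|_{L^2(0,T)}^2\le T^2\|f\|_\om^2<\infty$. Thus $U_M$ is Cauchy and converges to some $u$ in $L^2(\om)$, while $AU_M=F_M\to f$ in $L^2(\om)$; since $W$ is complete in the graph norm, $u\in W$ with $Au=f$. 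Finally $u_k(0)=0$ by the Duhamel construction, so the modal characterization of the previous step yields $\ip{Dv,u}_W=0$ for all $v\in\V^*$, i.e. $u\in\lprp{D(\V^*)}=V$. This shows $A$ is onto.

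Combining the three pieces gives the bijection. The a priori bound $\|u\|_\om\le T\|Au\|_\om$ that falls out of the same Duhamel estimate, now applied to an arbitrary $u\in V$ (legitimate precisely because $u_k(0)=0$), furnishes an explicit stability constant and reconfirms well-posedness without appeal to the open mapping theorem. I would use Assumption~\ref{asm:dense} only to identify, via Lemma~\ref{lem:VDV*}, the space $V$ with $\lprp{D(V^*)}$ rather than merely $\lprp{D(\V^*)}$; this is what makes $A$ and $A^*$ a genuine adjoint pair and aligns the argument with the abstract well-posedness result of Appendix~\ref{apn:weak}, though the concrete construction above already delivers surjectivity directly.
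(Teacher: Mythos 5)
Your overall architecture (continuity, bijectivity, then the open mapping theorem on the closed subspace $V\subset W$) is sound, and your surjectivity half is essentially the paper's own Duhamel argument. But the pivot of your proof --- the ``modal reformulation'' --- contains a genuine error. You claim that for every $u\in W$ with $Au=g\in L^2(\om)$, projection onto $e_k$ gives $iu_k'+\omega_k^2u_k=g_k$. This is false: replacing the projection of $-\Delta_x u$ by $\omega_k^2u_k$ requires moving the Laplacian onto $e_k$ via Green's identity, which produces lateral boundary terms; membership in $W$ carries no lateral boundary information (that is exactly why the paper avoids trace theory), so those terms neither vanish nor even make sense. Concretely, take $\om_0=(0,1)$ and $u\equiv 1$: then $Au=0\in L^2(\om)$, yet $u_k(t)=\sqrt{2}\,(1-(-1)^k)/(k\pi)$ is a nonzero constant for odd $k$, so $iu_k'+\omega_k^2u_k=\omega_k^2u_k\neq 0=g_k$. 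Consequently your characterization ``$u\in V$ if and only if $u_k(0)=0$ for all $k$'' is also false: $u(x,t)=t$ has $Au=i\in L^2(\om)$ and $u_k(0)=0$ for every $k$, but $u\notin V$ because its modes violate the ODE (equivalently, it violates the spatial Dirichlet condition). This breaks both halves of your proof that rest on the characterization: injectivity (you need the ODE, not just $u_k(0)=0$, to run the uniqueness argument) and the membership verification $u\in V$ in surjectivity (checking $u_k(0)=0$ alone is not enough).

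The repair is to extract \emph{both} the ODE and the initial condition from the annihilator condition defining $V$: for $u\in\lprp{D(\V^*)}$, test with $v=e_k\psi$ where $\psi$ is smooth with $\psi(T)=0$ (in one space dimension $e_k\psi\in\V^*$); taking $\psi\in\D(0,T)$ yields the ODE, and then taking $\psi(0)\neq 0$ yields $u_k(0)=0$. The correct statement is that $u\in V$ if and only if every mode satisfies the ODE \emph{and} $u_k(0)=0$; with this, your plan goes through in one space dimension, though for a general Lipschitz $\om_0$ (the setting of the theorem) your test functions $e_k\psi$ need not be smooth up to $\d\om_0$, which is a further gap. Note that the paper never needs such a characterization: for surjectivity it shows the finite sums $U_M$ lie in $V$ by integration by parts against $\V^*$ and passes to the limit using closedness of $V$, and for injectivity it runs a mirrored Duhamel construction to prove that $A^*:V^*\to L^2(\om)$ is surjective --- this is where Assumption~\ref{asm:dense} and Lemma~\ref{lem:VDV*} actually enter, via $V^*=\lprp{D(\V)}$ --- and then concludes from $\ker(A)=\lprp{\ran(A^*)}$, an argument valid in any dimension. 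A correctly executed modal injectivity proof would, incidentally, dispense with Assumption~\ref{asm:dense} altogether (consistent with the paper's remark that the assumption is used only for injectivity), so your route is potentially stronger; but as written, its central lemma is false.
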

\begin{proof}
  To prove the surjectivity of $A$, suppose $f \in L^2(\om)$. Recall
  the definitions of $e_k$, $u_k$ and $f_k$ from~\eqref{eigenproblem}
  and~\eqref{eq:fkukFMUM}.
  Clearly, $AU_M = F_M$. Since $U_M$ and any $\varphi \in \V^*$ are
  smooth enough for integration by parts using $\varphi|_{\vG^*}=0$
  and $U_M|_\vG=0,$ we have 
  \begin{align*}
    (i \d_t U_M, \varphi)_\om 
    & = (U_M, i \d_t \varphi)_\om, \\
    (\Delta U_M, \varphi)_\om
    & = (U_M, \Delta \varphi)_\om.
  \end{align*}
  Hence
  $\ip{D \varphi, U_M}_W = (A \varphi, U_M)_\om - (\varphi, A
  U_M)_\om=0$
  for all $\varphi \in \V^*.$ By~\eqref{eq:domA}, this implies that
  $U_M$ is in $V$.

  Next, we show that $U_M$ is a Cauchy sequence in $V$. For any $N>M$,
  \begin{align*}
  \| U_M - U_N \|_\om^2 
    & = 
      \sum_{k=M+1}^N \int_0^T |u_k(t)|^2 \; dt
      \le 
      \frac{1}{2}T^2 \sum_{k=M+1}^\infty   \int_0^T |f_k(t)|^2 \; dt,
    \\
    \| A(U_M - U_N) \|_\om^2 
    & = \| F_M - F_N \|_\om^2 
      \le \sum_{k=M+1}^\infty \int_0^T | f_k(t)|^2 \; dt,
  \end{align*}
  both of which converge to 0 as $M\to \infty$, because
  $f \in L^2(\om)$. Thus $U_M$ is Cauchy. It must therefore have an
  accumulation point $u$ in $V$. Moreover, since $Au$ and $f$ are
  $L^2(\om)$-limits of the same sequence $F_M = AU_M$, we have
  $A u = f$. Thus $A: V \to L^2(\om) $ is surjective.

  We use a similar argument (with $u_k$ defined by integrals from $T$
  to $t$) to show that $A=A^* : V^* \to L^2(\om)$ is also
  surjective. We omit the details, but note that 
  the only difference is that instead of~\eqref{eq:domA},
  we must now use
  \[
  V^* = \lprp{ D (\V)}, 
  \]
  which follows from~\eqref{eq:V*DV}, Assumption~\ref{asm:dense} and
  Lemma~\ref{lem:VDV*}. Finally, since $\ker(A) = \rev{\lprp{\ran(A^*)}}$,
  the surjectivity of $A^* : V^* \to L^2(\om)$ shows that
  $A: V \to L^2(\om)$ is injective, thus completing the proof of the
  stated bijectivity.
\end{proof}

\begin{remark}
  An example of a standard well-posedness result for the Schr\"odinger
  equation obtained using semigroup theory is
  \cite[Theorem~4.8.1]{Kesav89}, which proves that there is one and
  only one solution to~\eqref{eq:ls2} whenever $f\equiv 0$ and
  $u(x,0)$ is in $H^2(\om) \cap H_0^1(\om)$.  In
  Theorem~\ref{thm:strong}, we have shown (by a different method) that
  the existence of a unique solution holds for any $f \in L^2(\om)$.
  Note that in the above proof, we used Assumption~\ref{asm:dense}
  only to obtain injectivity.  If one opts to use the results
  of~\cite{Kesav89} (with $u(x,0)\equiv 0$ and $f\equiv 0$) to
  conclude injectivity, then there is no need to place
  Assumption~\ref{asm:dense} in Theorem~\ref{thm:strong}.
\end{remark}

\subsection{A weak formulation}

Now we consider a mesh-dependent weak formulation that is the basis
of the DPG method. This formulation, sometimes called the
``ultraweak'' formulation, is given in a general setting in
Problem~\ref{prb:weak} of Appendix~\ref{apn:weak}.  We apply this to 
 our example of
the Schr\"odinger equation. 

The spacetime domain $\om$ is partitioned into a mesh $\oh$ of
finitely many open elements $K$ such that
$\bar \om = \cup_{K \in \oh} \bar K$ where
$h=\max_{K \in \oh} \diam(K)$.  \rev{Particularizing the general
  definitions in Appendix~\ref{apn:weak} (see~\eqref{eq:WhWh*}
  through~\eqref{eq:Q}) to the Schr\"odinger example, we let
  $A_h=A_h^*$ be the Schr\"odinger operator applied element by element
  and let $W_h = W_h^* = \{ w \in L^2(\om): A (w|_K) \in L^2(K)$ for
  all $K \in \oh\}$.  The operator $D_h : W_h \to W_h'$ is defined by
  $\ip{D_h w, v}_{W_h} = (A_h w,v)_\om - (w, A_h v)_\om$ for all
  $w,v \in W_h$ and let $D_{h,V}: V \to W_h'$ be denote $D_h|_V$. The
  range of $D_{h,V}$, denoted by $Q,$ is made into a complete space by
  the norm
  $\| q\|_Q = \inf_{ v \in D_{h,V}^{-1}(\{ q \}) } \| v \|_W$.
  Abbreviating the duality pairing $\ip{\cdot,\cdot}_{W_h}$} by
  $\ip{ \cdot, \cdot}_h$, define the sesquilinear form
  $ b( (u,q), v) = (u, A_h v)_\om + \ip{ q,v }_h$ on
  $(L^2(\om) \times Q) \times W_h$.

\begin{problem}[Ultraweak formulation] \label{prb:weakSchrodinger}{\em 
  Given $F \in W_h',$\,  find $u\in L^2(\om)$
  and $q \in Q$ such that
  \[
    b((u,q), v)= F(v)
  \qquad
  \forall v \in W_h.    
  \]}
\end{problem}

\begin{theorem} \label{thm:uw_schrod}
  Suppose that Assumption~\ref{asm:dense} holds. Then
  Problem~\ref{prb:weakSchrodinger} is well-posed, i.e., there is a
  $C>0$ such that given any $F\in W_h'$, there is a unique solution
  $(u,q) \in L^2(\om) \times Q$ to Problem~\ref{prb:weakSchrodinger}
 that satisfies
  \[
  \| u \|_\om^2 + \|q \|_Q^2 \;\le\, C\, \| F \|_{W_h'}^2.
  \]
  Moreover, if $F(v) = (f,v)_\om$ for some $f \in L^2(\om),$ then~$u$
  is in~$V$ and $q= D_hu$.
\end{theorem}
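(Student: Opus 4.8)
The plan is to recognize Problem~\ref{prb:weakSchrodinger} as the Schr\"odinger instance of the abstract ultraweak problem (Problem~\ref{prb:weak}) and to prove its well-posedness by verifying the Banach--Ne\v{c}as--Babu\v{s}ka conditions for $b$ on $(L^2(\om)\times Q)\times W_h$, using as the sole analytic input the strong well-posedness already furnished by Theorem~\ref{thm:strong}. That theorem, together with the bounded inverse theorem, gives a constant $c$ with $\|v\|_W\le c\,\|Av\|_\om$ for $v\in V$ and $\|v\|_W\le c\,\|A^* v\|_\om$ for $v\in V^*$, and makes both $A\colon V\to L^2(\om)$ and $A^*\colon V^*\to L^2(\om)$ bijections. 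I would first record the structural identity
\[
V^* = \{\, v\in W_h : \ip{q,v}_h = 0 \ \text{for all } q\in Q \,\},
\]
that the conforming test space is exactly the annihilator of the flux space $Q$ inside $W_h$. The inclusion ``$\subseteq$'' is immediate: for $q=D_h w$ with $w\in V$ and $v\in V^*\subset W$ one has $\ip{q,v}_h=(Aw,v)_\om-(w,Av)_\om=\ip{Dw,v}_W=0$ because $V^*=\lprp{D(V)}$. For ``$\supseteq$'' I would test with $\phi\in\V\subset V$ (Lemma~\ref{lem:VVsubset}) supported in single elements to force $A_h v=Av$ globally, hence $v\in W$, and then read off $v\in\lprp{D(V)}=V^*$.

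Boundedness of $b$ is routine; the only point worth isolating is that, writing $q=D_h w$ with $w\in V$, $|\ip{q,v}_h|=|(Aw,v)_\om-(w,A_h v)_\om|\le\sqrt2\,\|w\|_W\|v\|_{W_h}$, and taking the infimum over representatives gives $\|q\|_{W_h'}\le\sqrt2\,\|q\|_Q$. For the nondegeneracy of $b$ in the test variable (equivalently, denseness of the range of $B\colon(u,q)\mapsto b((u,q),\cdot)$), suppose $b((u,q),v)=0$ for all $(u,q)$. Choosing $q=0$ and varying $u$ yields $A_h v=0$; choosing $u=0$ and varying $q$ yields $v\in V^*$ via the identity above. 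Then $A^* v=A_h v=0$, so injectivity of $A^*$ forces $v=0$.

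The inf-sup bound $\|B(u,q)\|_{W_h'}\gtrsim(\|u\|_\om^2+\|q\|_Q^2)^{1/2}$ I would obtain by controlling the two trial components separately. To control $u$, surjectivity of $A^*$ supplies $v_0\in V^*$ with $A^* v_0=u$ and $\|v_0\|_W\le c\,\|u\|_\om$; since $v_0\in V^*$ annihilates $Q$, $b((u,q),v_0)=\|u\|_\om^2$, whence $\|B(u,q)\|_{W_h'}\ge c^{-1}\|u\|_\om$. To control $q$, take a near-maximizer $v_1\in W_h$ for $\|q\|_{W_h'}$ normalized by $\|v_1\|_{W_h}=1$, so that $b((u,q),v_1)\ge\tfrac12\|q\|_{W_h'}-\|u\|_\om$; inserting the bound $\|u\|_\om\le c\,\|B(u,q)\|_{W_h'}$ gives $(1+c)\|B(u,q)\|_{W_h'}\ge\tfrac12\|q\|_{W_h'}$, hence $\|B(u,q)\|_{W_h'}\gtrsim\|q\|_{W_h'}$. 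The \emph{main obstacle} is the one remaining step, the equivalence $\|q\|_Q\lesssim\|q\|_{W_h'}$ on $Q$ (the reverse was shown above). This closed-range property of the flux space is precisely the technical heart of the matter and the reason the quotient norm $\|\cdot\|_Q$ was introduced; I expect to extract it from the general theory of Appendix~\ref{apn:weak}, where a test function realizing $\|q\|_Q$ is constructed by solving local, element-by-element adjoint problems whose stability is inherited from the bounded invertibility of $A$ and $A^*$. This is where the bulk of the work lies, and it does not reduce to elementary manipulation.

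With conditions (A) and (B) in hand, $B\colon L^2(\om)\times Q\to W_h'$ is a bounded bijection and the stated stability estimate follows with $C=\gamma^{-2}$, where $\gamma$ is the inf-sup constant. For the final assertion, given $F(v)=(f,v)_\om$ I would set $u=A^{-1}f\in V$ (Theorem~\ref{thm:strong}) and $q=D_h u\in Q$; since $u\in V\subset W$, $A_h u=Au=f$, and a direct computation gives $b((u,q),v)=(u,A_h v)_\om+\big[(A_h u,v)_\om-(u,A_h v)_\om\big]=(f,v)_\om=F(v)$ for all $v\in W_h$. Thus $(u,D_h u)$ solves the problem, and uniqueness from well-posedness identifies it as \emph{the} solution, giving $u\in V$ and $q=D_h u$.
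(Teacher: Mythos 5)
Your plan is essentially a re-derivation of the paper's \emph{second} proof of the abstract result (Theorem~\ref{thm:wellposed-mesh}): your annihilator identity $V^*=\{v\in W_h:\ip{q,v}_h=0\text{ for all }q\in Q\}$ is Lemma~\ref{lem:unbroken}, your control of $\|u\|_\om$ by testing with $v_0\in V^*$ solving $A^*v_0=u$ is \eqref{eq:infsupvolume}, and your control of $q$ reduces the whole theorem to the bound you yourself flag as ``the main obstacle,'' namely $\|q\|_Q\lesssim\|q\|_{W_h'}$. That bound is exactly Lemma~\ref{lem:infsupinterface}, it is the technical heart of the entire argument, and you do not prove it --- you only say you ``expect to extract it'' from the appendix. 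This is a genuine gap, not a routine citation: the paper's proof of that lemma takes the maximizer $\ut_q\in W_h^*$ of the dual norm, shows that $u_q:=A_h^*\ut_q$ satisfies $A_h^*A_hu_q+u_q=0$ and $D_hu_q=q$, proves $u_q\in V$ using hypothesis \eqref{eq:asm:1}, and only then identifies $u_q$ with the minimal-norm representative defining $\|q\|_Q$ in \eqref{eq:Qnorm}. Contrary to your sketch, the delicate point is not the stability of the (indeed element-local) auxiliary problem, but the global statement $u_q\in V$, which rests on \eqref{eq:asm:1}, not on invertibility of $A$. Moreover, if you are willing to quote the appendix at this level of generality, the economical route is the paper's own two-line proof: verify \eqref{eq:asm:1} and \eqref{eq:asm:2} and invoke Theorem~\ref{thm:wellposed-mesh} directly.

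This exposes the second omission: you never verify \eqref{eq:asm:1}, i.e. $V=\lprp{D^*(V^*)}$ (in the Schr\"odinger case, $V=\lprp{D(V^*)}$, since $D=D^*$). This is precisely where Assumption~\ref{asm:dense} enters beyond Theorem~\ref{thm:strong}: the paper obtains it from \eqref{eq:domA} together with Lemma~\ref{lem:VDV*}, and it is indispensable both for Theorem~\ref{thm:wellposed-mesh} and for the deferred Lemma~\ref{lem:infsupinterface}; throughout your proposal you only ever use $V^*=\lprp{D(V)}$, i.e.\ \eqref{eq:V*DV}, which holds without any density. Two smaller repairs: (a) in the ``$\supseteq$'' direction of your annihilator identity, testing with $\phi$ supported in a single element proves nothing new (it only recovers $A(v|_K)\in L^2(K)$, already true for $v\in W_h$, and cannot rule out distributional contributions of $Av$ supported on the interfaces); to conclude $v\in W$ you must test with arbitrary $\phi\in\D(\om)$ crossing element boundaries and use $\ip{D_h\phi,v}_h=0$, valid because $\D(\om)\subset V$ makes $D_h\phi$ an element of $Q$; (b) Theorem~\ref{thm:strong} as stated gives bijectivity of $A:V\to L^2(\om)$ only, so the bijectivity of $A^*:V^*\to L^2(\om)$ that your $u$-control relies on requires the supplementary argument from that theorem's proof (surjectivity of $A^*$) together with $\ker(A^*)=\rprp{\ran(A)}$. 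Items (a) and (b) are fixable; the unproven flux bound and the unverified \eqref{eq:asm:1} are the real gaps.
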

\begin{proof}
  The result follows from   Theorem~\ref{thm:wellposed-mesh}.
  Since Lemma~\ref{lem:VDV*} together with~\eqref{eq:domA}
  implies~\eqref{eq:asm:1} and since Theorem~\ref{thm:strong}
  implies~\eqref{eq:asm:2}, the assumptions of
  Theorem~\ref{thm:wellposed-mesh} are verified. 
\end{proof}

\section{Verification of the density assumption}  \label{sec:dense}

In the next three sections, $\om_0$ is set to be the  interval $(0,L)$
and $\om = (0,L) \times (0,T)$ where $L, T>0.$ The purpose of this
section is to verify the density assumption
(Assumption~\ref{asm:dense}) in this case of one space dimension.

\begin{theorem}
  \label{thm:dense}
  Let $\om =(0,L)\times(0,T)$. Then
  $\V^*= \{ \varphi \in \D(\bar \om): \varphi|_{\vG^*} =0\}$ is dense
  in $V^*$, and $\V= \{ \varphi \in \D(\bar \om): \varphi|_{\vG} =0\}$
  is dense in $V.$
\end{theorem}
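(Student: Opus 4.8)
The plan is to diagonalize the spatial part of the operator using the Dirichlet eigenbasis of $-\dxx$ on $\om_0=(0,L)$, thereby reducing the density question to a one-dimensional statement about functions of $t$. Let $e_k$ be the normalized eigenfunctions from \eqref{eigenproblem} (here $e_k(x)=\sqrt{2/L}\,\sin(\og_k x)$ with $\og_k=k\pi/L$); they form an orthonormal basis of $L^2(\om_0)$. For $u\in V$ I would write $u_k(t)=\int_{\om_0}u(x,t)\,\bar e_k(x)\,dx$, so that $u=\sum_k u_k e_k$ in $L^2(\om)$ and, by Parseval and Fubini, $\|u\|_\om^2=\sum_k\|u_k\|_{L^2(0,T)}^2$ and $\|Au\|_\om^2=\sum_k\|(Au)_k\|_{L^2(0,T)}^2$. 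Since $V$ is an annihilator it is closed in $W$, and Lemma~\ref{lem:VVsubset} gives $\V\subset V$; hence it suffices to prove the reverse inclusion $V\subseteq\overline{\V}$ (closure in the $W$-norm), and symmetrically for $V^*$.

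The key step is to extract, directly from the defining relation $V=\lprp{D(\V^*)}$ in \eqref{eq:domA}, the two facts that the reduction needs: that each coefficient solves an ODE and that it vanishes at $t=0$. First I would test against $v=e_k\,\psi$ with $\psi\in\D(0,T)$; such $v$ lies in $\V^*$ (it in fact vanishes on all of $\d\om$), so $\ip{Dv,u}_W=0$, which by the definition $\ip{Dv,u}_W=(Av,u)_\om-(v,Au)_\om$ and the elementary identity $A(e_k\psi)=e_k(i\psi'+\og_k^2\psi)$ yields, after one integration by parts in $t$, the distributional identity $(Au)_k=iu_k'+\og_k^2 u_k$. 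In particular $iu_k'=(Au)_k-\og_k^2 u_k\in L^2(0,T)$, so $u_k\in H^1(0,T)\hookrightarrow C[0,T]$. Next I would test against $v=e_k\,\chi$ with $\chi\in C^\infty[0,T]$, $\chi(T)=0$, $\chi(0)\neq 0$; this $v$ is still in $\V^*$ (it vanishes on $\vG^*$), and the same computation now carries a boundary contribution at $t=0$, giving $\ip{Dv,u}_W=-i\,\chi(0)\,\overline{u_k(0)}$. Since this vanishes, $u_k(0)=0$.

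With these two facts the approximation is routine. Truncating the series, $u^{(M)}=\sum_{k\le M}u_k e_k$ satisfies $Au^{(M)}=\sum_{k\le M}(iu_k'+\og_k^2 u_k)e_k$, and the Parseval identities above show $u^{(M)}\to u$ in $W$; so it is enough to approximate a single term $u_k e_k$. For this I would approximate $u_k\in H^1(0,T)$ with $u_k(0)=0$ by smooth $\psi_n$ with $\psi_n(0)=0$ and $\psi_n\to u_k$ in $H^1(0,T)$ (extend by zero across $t=0$, mollify, and subtract a fixed smooth correction $c_n\eta$ with $\eta(0)=1$ to restore the vanishing at $0$, where $c_n\to 0$). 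Then $\varphi_n:=\psi_n e_k\in\D(\bar\om)$ vanishes on $\vG$ (on the lateral edges because $e_k$ does, on the bottom because $\psi_n(0)=0$), so $\varphi_n\in\V$, and $A\varphi_n=e_k(i\psi_n'+\og_k^2\psi_n)$ shows $\varphi_n\to u_k e_k$ in $W$. This gives $V\subseteq\overline{\V}$. The statement for $V^*$ follows by the symmetric argument: since $V^*=\lprp{D(V)}\subseteq\lprp{D(\V)}$ by \eqref{eq:V*DV} and Lemma~\ref{lem:VVsubset}, one may test against $v\in\V$; with $\chi(0)=0$ this yields the analogous ODE together with the terminal condition $u_k(T)=0$, after which one approximates by smooth functions vanishing on $\vG^*$.

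The main obstacle is precisely the first move of the second paragraph. Because $W$ carries no trace theory and the spatial Laplacian of $u$ need not lie in $L^2$ separately (this is the pathology exhibited in the Introduction), one cannot integrate by parts in $x$ to decouple the modes, nor speak of $u_k(0)$, in any naive way. The device that resolves this is to test only against the special smooth functions $e_k\psi$ and $e_k\chi$, for which the $x$-integration by parts is licit since $e_k$ is a genuine eigenfunction vanishing at $x=0,L$; the membership $u\in\lprp{D(\V^*)}$ then hands over exactly the coefficientwise ODE and the initial condition without any appeal to traces. Once this is in place, the remaining steps are standard truncation and one-dimensional mollification.
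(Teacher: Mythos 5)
Your proof is correct, and it takes a genuinely different route from the paper's. The paper proves density by a Friedrichs-style regularization in spacetime: it extends $v\in V^*$ by odd reflection across $x=0$ and $x=L$, proves the commutation $AGv=GAv$ weakly (the same device you use: move $A$ onto a test function and kill the $\langle Dv,\cdot\rangle_W$ term via the annihilator definition of $V^*$), translates in the $-t$ direction so the support clears the face $t=T$, and then mollifies, checking that the smooth approximants vanish on $\vG^*$ (on the lateral sides by odd symmetry, near $t=T$ by the translation) and converge in the $W$-norm. You instead diagonalize $-\dxx$ in the Dirichlet eigenbasis and reduce everything to one dimension: testing the annihilator identity against the separated functions $e_k\psi$ and $e_k\chi$ hands over the coefficient ODE $(Au)_k=iu_k'+\og_k^2u_k$ — hence $u_k\in H^1(0,T)$ — and the endpoint value $u_k(0)=0$ (resp.\ $u_k(T)=0$ for $V^*$), after which truncation plus standard 1D approximation finishes; your ordering of these two tests is essential and correct, since the integration by parts producing the boundary term needs $u_k\in H^1$ first. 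Both proofs lean on the same structural inputs, namely \eqref{eq:domA}, \eqref{eq:V*DV} and Lemma~\ref{lem:VVsubset}, and both are tailored to the model geometry: the paper's to the reflection symmetry of the interval and constant coefficients, yours to separability of $A$ and smoothness of $e_k$ up to the boundary. Your approach buys something extra: an explicit modewise characterization of $V$ and $V^*$ (a substitute trace theory, $u_k\in H^1$ with $u_k(0)=0$, resp.\ $u_k(T)=0$), and it would carry over to $\om_0\subset\RRR^n$ with smooth boundary, where reflection is awkward. The paper's mollification template, in turn, never touches spectral theory — it only needs that $A$ commutes with reflections and translations — and is the more standard, robust pattern for graph-space density results. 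One small step to tighten in yours: when smoothing $u_k$ near $t=T$, first extend it across $t=T$ (say evenly) before convolving; this is routine but should be said.
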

\begin{proof}
  Since the proofs of both the stated density results are similar, we
  will only show the proof of density of $\V^*$ in $V^*$. 

  {\em {\underline{\smash[b]{Step~1. Extend:}}}} Let $\om_l= (-L,0]\times (0,T)$
  and $\om_{r} =[L,2L)\times (0,T)$.  Define an operator $G$ that
  extends functions on $\om$ to
  $\hat\om \equiv \om_l \cup \om \cup \om_r$ by
  \[ 
  G w (x,t) =
  \left\{ 
    \begin{aligned}
      -& w(-x,t),    &&\; (x,t) \in \om_l,
      \\
      -& w(2L-x,t),  &&\; (x,t) \in \om_r,
    \end{aligned}
  \right.
  \]
  (and $Gw (x,t)= w(x,t)$ for all $(x,t) \in \om$).  Let $G'$ be the
  reverse operator that maps functions on $\hat\om$ to $\om$ by
  $ G' \bar w (x,t) = \bar w(x,t) - \bar w (-x,t) - \bar w(2L - x, t)
  $
  for all $(x,t) \in \om$ (see Figure~\ref{fig:density}). Such
  definitions are to be interpreted a.e., so that, for example, $Gw$ is
  well defined for any $w$ in $L^2(\om)$.  It is easy to see by a
  change of variable that
  \begin{equation}
    \label{eq:GG'}
    (Gf, g)_{\hat\om} = (f, G'g)_\om,\qquad
    \forall f \in L^2(\om), \; g \in L^2(\hat\om).  
  \end{equation}

  Next, we claim that
  \begin{equation}
    \label{eq:commuteAG}
    A G v = G A v, \qquad \forall v \in V^*.
  \end{equation}
  Clearly, $Gv$ is in $L^2(\hat\om)$.  Let $\varphi \in \D(\hat\om)$.
  \rev{Let $\ip{ AG v, \varphi}_{\D(\hat\om)}$ denote the
  action of the distribution $AG v$ on $\bar \varphi$. Then}
  $
    \ip{ AG v, \varphi}_{\D(\hat\om)}
     =  (Gv, A \varphi)_{\hat\om}
      = (v,   G'A \varphi)_\om$
  because of~\eqref{eq:GG'}. By the chain rule applied to the smooth
  function $\varphi$, we find that 
  \begin{equation}
    \label{eq:G'A}
      G' A \varphi = A G' \varphi.
  \end{equation}
  Hence,
  \begin{align}
    \label{eq:AG}
  \ip{ AG v, \varphi}_{\D(\hat\om)}
    &
      = ( v, A G'\varphi)_\om
      = (Av, G'\varphi)_\om - \ip{ D v, G'\varphi}_W.
  \end{align}
  Now observe that $G'\varphi|_\vG = 0$. Hence, by
  Lemma~\ref{lem:VVsubset}, $G'\varphi$ is in $V$.  Since $v \in V^*$
  and $G'\varphi \in V$, the last term of~\eqref{eq:AG} must vanish
  by~\eqref{eq:V*DV}. Thus
  $\ip{ AG v, \varphi}_{\D(\hat\om)} = (Av, G'\varphi)_\om = (G A v,
  \varphi)_{\hat \om},$
  completing the proof of the claim~\eqref{eq:commuteAG}.  In view
  of~\eqref{eq:commuteAG}, we conclude that $Gv$ is in $W(\hat \om)$
  whenever $v \in V\rev{^*}$.

  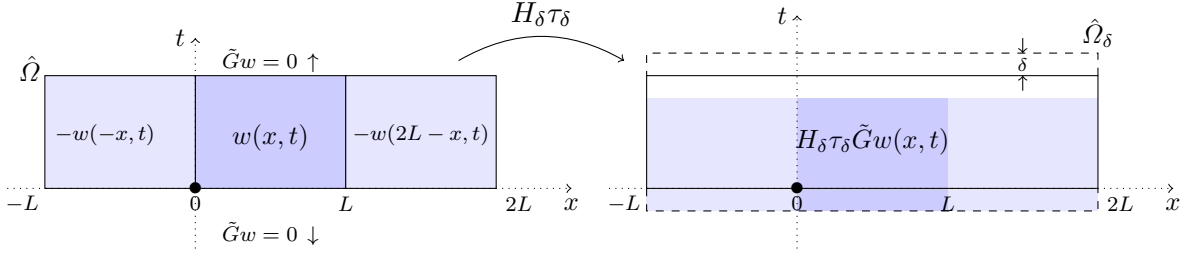
\begin{figure}
    \begin{center}

      \begin{tikzpicture}
        \draw [fill=blue!20] (2,0)--(2,1.5)--(4,1.5)--(4,0)--cycle; 
        \draw [fill=blue!10] (0,0) --(0 ,1.5)--(2,1.5)--(2,0)--cycle;
        \draw [fill=blue!10] (4,0) --(4,1.5)--(6,1.5)--(6,0)--cycle;
        \draw [ thick] (0.8,  1) node [ below]{ \fontsize{8}{12}\selectfont $-w(-x,t)$};
        \draw [ thick] (5, 1) node [ below]{ \fontsize{8}{10}\selectfont $-w(2L-x,t)$};
        \draw [ thick] (3,  1) node [ below]{ \fontsize{10}{15}\selectfont $w(x,t)$};
        \draw [black] (-.2,  1.9) node [ below]{ \fontsize{10}{14}\selectfont $\hat\om$};
        \draw [->,dotted] (2, -0.5-.3) -- (2, 2) node[left]{  \fontsize{10}{15}\selectfont$t$};
        \draw [->,dotted] (-0.5, 0) -- (7, 0) node[below]{  \fontsize{10}{15}\selectfont$x$};
        %
        \node[] at (-.3,-0.2) { \fontsize{8}{15}\selectfont$-L$ };
        \node[] at (4,-0.2) { \fontsize{8}{15}\selectfont$L$ };
        \node[] at (6.3,-0.2) { \fontsize{8}{15}\selectfont$2L$ };
        \node [black] at (2,0) {\textbullet };
        \node [black] at (2,-.2)  {\fontsize{8}{15}\selectfont $0$};
        \node [black] at (3,-.6)  {\fontsize{8}{15}\selectfont $\tilde Gw = 0  \, \downarrow$};
        \node [black] at (3,1.7)  {\fontsize{8}{15}\selectfont $\tilde Gw = 0\, \uparrow $};

        \path[->] (5.5,1.7) edge [bend left] node[above] {$H_\delta \tau_\delta $} (7.7,1.7);

        %
        \begin{scope}[shift={(8,-0.3)}]
          \fill[blue!10] (4,0)--(4,2.1-0.6)--(6,2.1-0.6)--(6,0)--cycle;
          \fill[blue!20] (2,0)--(2,2.1-0.6)--(4,2.1-0.6)--(4,0)--cycle; 
          \fill[blue!10] (0,0)--(0,2.1-0.6)--(2,2.1-0.6)--(2,0)--cycle;

          \draw[dashed] (6,2.1-0.3) -- (6,2.1) -- (0,2.1) --(0,2.1-0.3);
          \draw[dashed] (0,0.3) --(0,0) -- (6,0) -- (6,0.3);

          \draw [] (0,0+.3) --(0,1.5+.3)--(6,1.5+.3)--(6,0+.3)--cycle;
          \draw [ thick] (3,  1+.3) node [ below]{ \fontsize{10}{15}\selectfont $H_\delta \tau_\delta \tilde G w(x,t)$};
          \draw [->,dotted] (2, -0.5) -- (2, 2+.6) node[left]{  \fontsize{10}{15}\selectfont$t$};
          \draw [->,dotted] (-0.5, 0+.3) -- (7, 0+.3) node[below]{  \fontsize{10}{15}\selectfont$x$};
          \draw [black] (6,  2.7) node [ below]{ \fontsize{10}{14}\selectfont $\hat\om_\delta$};
          \node[] at (-.3,0.1) { \fontsize{8}{15}\selectfont$-L$ };
          \node[] at (4,0.1) { \fontsize{8}{15}\selectfont$L$ };
          \node[] at (6.3,0.1) { \fontsize{8}{15}\selectfont$2L$ };
          \node [black] at (2,0+.3) {\textbullet };
          \node [black] at (2,+.1) {\fontsize{8}{15}\selectfont $0$};

          \draw[->] (5,2.1+0.2)--(5,2.1);
          \draw[->] (5,2.1-0.5)--(5,2.1-0.3);
          \node at  (5,2.1-0.15) {\fontsize{7}{15}\selectfont$\delta$};
        \end{scope}
      \end{tikzpicture}
      \caption{Extension and translation in the proof of Theorem~\ref{thm:dense}.}
      \label{fig:density}
    \end{center}
  \end{figure}

  {\em {\underline{\smash[b]{Step~2. Translate:}}}} Let $\Gt v$ denote \rev{the}
  extension of $G v$ by zero to $\RRR^2$; i.e., $\Gt v$ equals $G v$
  on $\hat \om$ and equals zero elsewhere.  Let $\tau_\delta$ be the
  translation operator in the $-t$ direction by $\delta$; i.e.,
  $(\tau_\delta w) (x,t) = w(x, t + \delta).$ Its well known \cite{Brezi11} that
  \begin{equation}
    \label{eq:transcont}
    \lim_{\delta\to 0} \| \tau_\delta g - g \|_{\RRR^2}  = 0
    \qquad \forall g \in L^2(\RRR^2).
  \end{equation}
  Let $\hat \om_\delta = (-L,2L) \times (-\delta, T+\delta)$ and let 
  $H_\delta$ denote the restriction of functions on $\RRR^2$ to
  $\hat \om_\delta$.  By a change of variable,
  \begin{equation}
    \label{eq:tautrans}
    (\tau_\delta \Gt f, g)_{\hat \om_\delta} 
    = ( Gf, \tau_{-\delta}g)_{\hat\om}
    \qquad \forall f \in L^2(\om), \; g \in L^2(\hat\om_\delta).
  \end{equation}
  We now claim that
  \begin{equation}
    \label{eq:AtauG}
    A H_\delta \tau_\delta \Gt v = H_\delta \tau_\delta \Gt Av
    \qquad \forall v \in V^*.
  \end{equation}
  Indeed, for any $\varphi \in \D(\hat \om_\delta)$, the action of the
  distribution $A H_\delta\tau_\delta \Gt v$ on $\bar \varphi$ equals
  \begin{align*}
    \ip{ A H_\delta\tau_\delta \Gt v, \varphi}_{\D(\hat\om_\delta)}
    & = ( \tau_\delta \Gt v, A\varphi)_{\hat\om_\delta}
      = ( G v,   A \tau_{-\delta}\varphi)_{\hat\om}
      = (  v,    G'A \tau_{-\delta}\varphi)_{\om} 
      = (  v,    AG' \tau_{-\delta}\varphi)_{\om} 
    \\
    &      = (  Av,    G' \tau_{-\delta}\varphi)_{\om} 
      - \ip{ D v, G'\tau_{-\delta}\varphi}_W,
  \end{align*}
  where we have used~\eqref{eq:tautrans}, \eqref{eq:GG'},
  and~\eqref{eq:G'A} consecutively.  Since
  $(G' \tau_{-\delta} \varphi)|_\vG =0,$ Lemma~\ref{lem:VVsubset}
  shows that $G' \tau_{-\delta} \varphi$ is in $V$, and consequently
  the last term above vanishes for all $v \in V^*$.  Continuing and
  using~\eqref{eq:GG'} and~\eqref{eq:tautrans} once more,
  $ \ip{ A H_\delta \tau_\delta \Gt v, \varphi}_{\D(\hat\om_\delta)} =
  (GAv, \tau_{-\delta} \varphi)_{\hat \om} = (\tau_\delta \Gt Av,
  \varphi)_{\hat \om_\delta}.$ This proves~\eqref{eq:AtauG}.

  {\em {\underline{\smash[b]{Step~3. Mollify:}}}} Consider the
  mollifier $\rho_\veps\in \D(\RRR^2)$, for each $\veps>0$, defined by
  $ \rho_\veps(x,t) = \veps^{-2} \rho_1( \veps^{-1} x,\veps^{-1} t),$
  where
  \[
  \rho_1(x,t)
  =
  \left\{
    \begin{aligned}
      & k\, e^{-\frac{1}{1-x^2-t^2}}
      && \text{ if } x^2 + t^2<1,
      \\
      & 0
      && \text{ if } x^2 + t^2\geq 1,
    \end{aligned}
  \right. 
  \]
  and $k$ is a constant chosen so that $\int_{\RRR^2} \rho_1 =1.$
  It is well known \cite{Brezi11} that when any function $w$ in $L^2(\RRR^2)$ is
  convolved with $\rho_\veps$, the result $\rho_\veps * w$ is
  infinitely smooth and satisfies
  \begin{equation}
    \label{eq:mollL2}
    \lim_{\veps \to 0} \| w - \rho_\veps * w \|_{\RRR^2 } = 0
    \qquad 
    \forall \, w \in L^2(\RRR^2).
  \end{equation}
  Consider any small enough $\delta>0$, say $\delta < \min(L/2, T/2)$,
  and define two functions $ v_\veps = \rho_\veps * \tau_\delta \Gt v$
  and $ a_\veps = \rho_\veps * \tau_\delta \Gt A v.$ Note that the two
  smooth functions $A v_\veps$ and $a_\veps$ need not coincide
  everywhere. However, because of~\eqref{eq:AtauG}, they coincide on
  $\om$ whenever $\veps < \delta/2$:
  \[
  A v_\veps = a_\veps \qquad \text{ on } \om.
  \]
  Let us therefore set $\delta$ to, say, $\delta=3 \veps$ and let $\veps < 
  \min(L/2, T/2)/3$ go to zero. Note that
  \begin{align*}
    \| Av_\veps - Av \|_{\om}
    & = \| a_\veps - Av \|_\om = 
      \|  \rho_\veps * \tau_\delta \Gt A v - Av \|_\om
    \\
    & \le 
      \|  \rho_\veps * \tau_\delta \Gt A v -  
          \tau_\delta \Gt A v \|_{\RRR^2} 
      + \| \tau_\delta \Gt A v  - \Gt Av \|_{\RRR^2},
\\
    \| v_\veps - v \|_{\om}
    &
      \le 
      \| \rho_\veps * \tau_\delta \Gt v  - \tau_\delta \Gt v \|_{\om} 
      + 
      \| \tau_\delta \Gt v - v \|_{\om} 
    \\
    & \le 
      \| \rho_\veps * \tau_\delta \Gt v  - \tau_\delta \Gt v \|_{\RRR^2} 
      + 
      \| \tau_\delta \Gt v - \Gt v \|_{\RRR^2}.
  \end{align*}
  Using~\eqref{eq:mollL2} and~\eqref{eq:transcont}, it now immediately
  follows that
  \[
  \lim_{\veps \to 0} \| v_\veps - v \|_W =0.
  \]

  To conclude, examine the value of $v_\veps$ at
  points $z=(0,t)$ for any $0<t<T$, namely 
  \[
  v_\veps(0,t) = \int_{\RRR}\int_\RRR 
  \rho_\veps(-x', t -t') \; (\tau_\delta \Gt v)(x',t')\; 
  dx'\,dt'.
  \]
  The integrand of the inner integral is the product of an even
  function $(\rho_\veps)$ of $x'$ and an odd function
  $\tau_\delta \Gt v$ of $x'$. Hence $v_\veps(0,t)=0$. The same holds
  for points $z = (L,t)$.  Moreover, since $\tau_\delta \Gt v(y)$ is
  identically zero in a neighborhood of $z = (T,x)$ for all $0<x<L$,
  we conclude that $v_\veps|_{\vG^*}=0.$
\end{proof}

\section{Error Estimates for the ideal DPG method}\label{sec:error}

Continuing to consider the set  $\om$ as defined in Section~\ref{sec:dense}, we
now proceed to analyze the convergence of the ideal DPG method for
Problem~\ref{prb:weakSchrodinger}. The ideal DPG method finds $u_h$
and $q_h$ in finite-dimensional subspaces $U_h\subset L^2(\om)$ and
$Q_h \subset Q$, respectively, satisfying 
\begin{align}
b((u_h, q_h), v) = F(v) \qquad
  \forall  v \in T(U_h\times Q_h ). \label{discrete}
\end{align} 
Here $ T : L^2(\om)\times Q \to W_h$ is defined by
$( T(z,r), v ) = b((z,r),v)$ for all $v \in W_h$ and any
$(z,r) \in L^2(\om)\times Q$.  The main feature of the ideal DPG
method is that the well-posedness of Problem~\ref{prb:weakSchrodinger}
implies quasioptimality of the method's error~\cite{DemkoGopal11}.
The wellposedness of Problem~\ref{prb:weakSchrodinger} follows from
Theorem~\ref{thm:uw_schrod}, now that we have verified
Assumption~\ref{asm:dense} in Theorem~\ref{thm:dense}. Hence to obtain
convergence rates for specific subspaces, we need only develop
interpolation error estimates. Since the interpolation properties of
the $L^2$-conforming $U_h$ are standard, we need only discuss those of
$Q_h.$ To study this, we will create a spacetime finite element space
$V_h \subset V$, then identify $Q_h$ as $D_h(V_h)$, and finally
establish interpolation estimates for $Q_h$ using those for~$V_h$.
Note that $V_h$ will be used only in the proof (and not in the
computations).

To transparently present the ideas, we shall limit ourselves to the
very simple case of a uniform mesh $\oh$ of spacetime square elements
of side length~$h$.  Let $\eh$ denote the set of edges of $\oh$.  On
any $E \in \eh,$ let $P_p(E)$ denote the space of polynomials on the
edge of degree at most~$p$.  On any $K \in \oh$, let $Q_p(K)$ denote
the space of polynomials of degree at most $p$ in $x$ and at most $p$
in $t$.  To begin the finite element construction, we consider the
reference element $\hK = (0,1) \times (0,1)$ and the element space
$Q_p(\hK),$ endowed with the following degrees of freedom: 
\rev{ For any $w \in H^{3}(K)$}, and for each
$i \in \{ 0, 1,\dots, p-2\}$ and $j \in \{ 0,1,2,\dots, p\},$ write
$x_i = i/( p-2)$ and $t_j = j/p$ and set
\[
\sigma_{ij}(w) = w(x_i, t_j),
\qquad 
\sigma_j^0(w) = \d_x w(0,t_j), 
\qquad
\sigma_j^1(w) = \d_x w(1,t_j).
\]
Together, these form a set $\vSig$ with
$(p-1)(p+1) + 2 (p+1)$ linear functionals (see Figure~\ref{fig:element}).  The triple
$(\hK, Q_k(\hK), \vSig)$ is a unisolvent finite element, in the
sense of~\cite{ciarletbook}, as we show next.

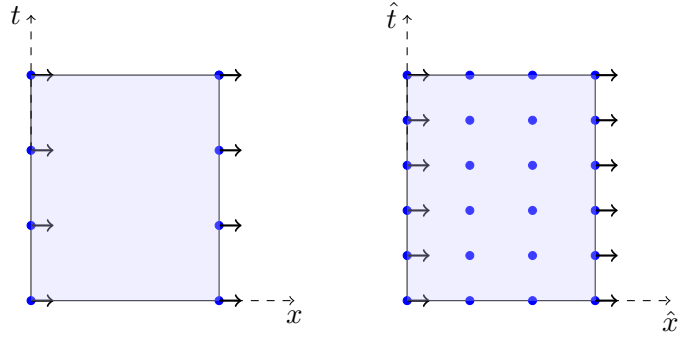
\begin{figure}
\begin{center}
\begin{tikzpicture}
\begin{scope}
\coordinate (top) at  (0,3);
\coordinate (bot) at  (2.5,0);
\coordinate (lft) at  (0,0);
\coordinate (rgt) at  (2.5,3);
\draw (lft)  -- (bot) ;
\draw (top) -- (rgt) ;
\draw  (rgt)  -- (bot) ;
\draw  (top)  --  (lft) ;
\filldraw[blue] (0,0) circle (1.5pt);
\filldraw[blue] (0,1) circle (1.5pt);
\filldraw[blue] (0,2) circle (1.5pt);
\filldraw[blue] (0,3) circle (1.5pt);
\filldraw[blue] (2.5,0) circle (1.5pt);
\filldraw[blue] (2.5,1) circle (1.5pt);
\filldraw[blue] (2.5,2) circle (1.5pt);
\filldraw[blue] (2.5,3) circle (1.5pt);
\filldraw[blue] (2.5,3) circle (1.5pt);

\draw[->,thick] (0,0) -- ($(.3,0)$) ;
\draw[->,thick] (0,1) -- ($(.3,1)$) node[left] {};
\draw[->,thick] (0,2) -- ($(.3,2)$) node[below] {};
\draw[->,thick] (0,3) -- ($(.3,3)$) node[left] {};

\draw[->,thick] (2.5,0) -- ($(2.5+ .3,0)$) ;
\draw[->,thick] (2.5,1) -- ($(2.5+ .3,1)$);
\draw[->,thick] (2.5,2) -- ($(2.5+.3,2)$);
\draw[->,thick] (2.5,3) -- ($(2.5+.3,3)$);

\fill [fill=blue!20, fill opacity=0.3] 
(bot) -- (rgt) -- (top) --(lft) ;
\draw[->,dashed] (2.5,0) -- ($(3.5,0)$) node[below] {$x$};
\draw[->,dashed] (0,2) -- ($(0,3.8)$) node[left] {$t$};
\end{scope}
\begin{scope}[shift={(5,0)}]
\coordinate (top) at  (0,3);
\coordinate (bot) at  (2.5,0);
\coordinate (lft) at  (0,0);
\coordinate (rgt) at  (2.5,3);
\draw (lft)  -- (bot) ;
\draw (top) -- (rgt) ;
\draw  (rgt)  -- (bot) ;
\draw  (top)  --  (lft) ;
\filldraw[blue] (0,0) circle (1.5pt);
\filldraw[blue] (0,3/5) circle (1.5pt);
\filldraw[blue] (0,2*3/5) circle (1.5pt);
\filldraw[blue] (0,3*3/5) circle (1.5pt);
\filldraw[blue] (0,4*3/5) circle (1.5pt);
\filldraw[blue] (0,5*3/5) circle (1.5pt);

\filldraw[blue] (2.5/3,0) circle (1.5pt);
\filldraw[blue] (2.5/3,3/5) circle (1.5pt);
\filldraw[blue] (2.5/3,2*3/5) circle (1.5pt);
\filldraw[blue] (2.5/3,3*3/5) circle (1.5pt);
\filldraw[blue] (2.5/3,4*3/5) circle (1.5pt);
\filldraw[blue] (2.5/3,5*3/5) circle (1.5pt);
\filldraw[blue] (2*2.5/3,0) circle (1.5pt);
\filldraw[blue] (2*2.5/3,3/5) circle (1.5pt);
\filldraw[blue] (2*2.5/3,2*3/5) circle (1.5pt);
\filldraw[blue] (2*2.5/3,3*3/5) circle (1.5pt);
\filldraw[blue] (2*2.5/3,4*3/5) circle (1.5pt);
\filldraw[blue] (2*2.5/3,5*3/5) circle (1.5pt);


\filldraw[blue] (2.5,0) circle (1.5pt);
\filldraw[blue] (2.5,3/5) circle (1.5pt);
\filldraw[blue] (2.5,2*3/5) circle (1.5pt);
\filldraw[blue] (2.5,3*3/5) circle (1.5pt);
\filldraw[blue] (2.5,4*3/5) circle (1.5pt);
\filldraw[blue] (2.5,5*3/5) circle (1.5pt);

\draw[->,thick] (0,0) -- ($(.3,0)$) ;
\draw[->,thick] (0,3/5) -- ($(.3,3/5)$) ;
\draw[->,thick] (0,2*3/5) -- ($(.3,2*3/5)$) ;
\draw[->,thick] (0,3*3/5) -- ($(.3,3*3/5)$) ;
\draw[->,thick] (0,4*3/5) -- ($(.3,4*3/5)$) ;
\draw[->,thick] (0,5*3/5) -- ($(.3,5*3/5)$) ;

\draw[->,thick] (2.5+.0,0) -- ($(2.5+.3,0)$) ;
\draw[->,thick] (2.5+0,3/5) -- ($(2.5+.3,3/5)$) ;
\draw[->,thick] (2.5+0,2*3/5) -- ($(2.5+.3,2*3/5)$) ;
\draw[->,thick] (2.5+0,3*3/5) -- ($(2.5+.3,3*3/5)$) ;
\draw[->,thick] (2.5+0,4*3/5) -- ($(2.5+.3,4*3/5)$) ;
\draw[->,thick] (2.5+0,5*3/5) -- ($(2.5+.3,5*3/5)$) ;

\fill [fill=blue!20, fill opacity=0.3] 
(bot) -- (rgt) -- (top) --(lft) ;
\draw[->,dashed] (2.5,0) -- ($(3.5,0)$) node[below] {$\hat x$};
\draw[->,dashed] (0,2) -- ($(0,3.8)$) node[left] {$\hat t$};
\end{scope}
\end{tikzpicture}
\caption{Degrees of freedom in the $p =3$ (left) and
  $p=5$ (right) cases.}
\label{fig:element}
\end{center}
\end{figure}

\begin{lemma} \label{lem:unisolv} %
  Suppose $p\ge 3$. Then any polynomial $w \in Q_{p}(\hK)$ is uniquely
  defined by the values of its degrees of freedom $ \sigma$ in $\vSig.$
\end{lemma}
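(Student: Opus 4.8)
The plan is to exploit that the number of functionals in $\vSig$ exactly matches $\dim Q_p(\hK)$, and then reduce unisolvence to a uniqueness statement through the tensor-product structure of the element. First I would count degrees of freedom: the point evaluations $\sigma_{ij}$ contribute $(p-1)(p+1)$ functionals (with $i$ ranging over $p-1$ values and $j$ over $p+1$), while the two derivative families $\sigma_j^0,\sigma_j^1$ contribute $2(p+1)$, for a total of $(p+1)^2=\dim Q_p(\hK)$. Since a linear map between spaces of equal finite dimension is a bijection precisely when it is injective, it suffices to show that the only $w\in Q_p(\hK)$ annihilated by every functional in $\vSig$ is $w\equiv 0$.

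So I would assume $\sigma(w)=0$ for all $\sigma\in\vSig$ and freeze the time variable at each node. Fixing $j\in\{0,\dots,p\}$, set $g(x)=w(x,t_j)$, a univariate polynomial of degree at most $p$. The vanishing of the $\sigma_{ij}(w)$ gives $g(x_i)=0$ for $i=0,\dots,p-2$, so $g$ vanishes at the $p-1$ distinct nodes $x_0=0,x_1,\dots,x_{p-2}=1$; the vanishing of $\sigma_j^0(w)$ and $\sigma_j^1(w)$ gives $g'(0)=g'(1)=0$. The crux is to show that these $p+1$ conditions force $g\equiv 0$.

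Here is where $p\ge 3$ enters, and this Hermite-type endpoint argument is the main obstacle. I would factor $g=\pi\, r$, where $\pi(x)=\prod_{i=0}^{p-2}(x-x_i)$ has degree $p-1$ and $r$ has degree at most $1$. Because the nodes are distinct (which needs $p\ge 3$), both $0$ and $1$ are \emph{simple} roots of $\pi$, so $\pi(0)=\pi(1)=0$ while $\pi'(0),\pi'(1)\neq 0$. Differentiating $g=\pi\,r$ and evaluating at the endpoints yields $g'(0)=\pi'(0)\,r(0)$ and $g'(1)=\pi'(1)\,r(1)$; hence $g'(0)=g'(1)=0$ forces $r(0)=r(1)=0$. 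Since $r$ is of degree at most one and vanishes at two distinct points, $r\equiv 0$, and therefore $g\equiv 0$. (The hypothesis $p\ge 3$ is used three times here: to make the nodes $x_i$ well defined and distinct, to keep both endpoints among them, and to ensure $\deg r\le 1$ so that the two derivative conditions exhaust the remaining freedom.)

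Finally I would propagate the conclusion across time. The previous step shows $w(\cdot,t_j)\equiv 0$ for every $j\in\{0,\dots,p\}$. Fixing $x$ arbitrarily, the map $t\mapsto w(x,t)$ is a polynomial of degree at most $p$ vanishing at the $p+1$ distinct nodes $t_0,\dots,t_p$, hence is identically zero; as $x$ was arbitrary, $w\equiv 0$. This establishes injectivity and therefore unisolvence. Everything outside the endpoint factorization is routine counting and one-dimensional polynomial uniqueness, so the simple-root argument matching the two derivative functionals against the linear factor $r$ is the genuine content of the proof.
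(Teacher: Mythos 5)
Your proof is correct and follows essentially the same route as the paper: restrict $w$ to each line $t=t_j$, use the Lagrange and Hermite conditions to kill the univariate restriction, then use the $p+1$ vanishing time-slices plus the dimension count $(p-1)(p+1)+2(p+1)=(p+1)^2=\dim Q_p(\hK)$. The only difference is that you spell out the univariate uniqueness step explicitly via the factorization $g=\pi\,r$ with simple endpoint roots, a detail the paper compresses into the single sentence that the Hermite and Lagrange degrees of freedom imply $w_j=0$.
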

\begin{proof}
  Suppose $w \in Q_p(\hK)$ and $\sigma(w)=0$ for all
  $\sigma \in \vSig$.  Then $w_j(x) = w(x, t_j)$ is a polynomial of
  degree $p$ in one variable ($x$). The Hermite and Lagrange degrees
  of freedom on $t=t_j$ imply $w_j=0$. Now, fixing~$x$, observe that the
  polynomial $w(x,t)$ is of  degree at
  most $p$ in the variable $t$ and has $p+1$ zeros.  Hence
  $w\equiv 0$ and the proof is complete since $\dim Q_p(\hK )$ equals
  the number of degrees of freedom.
\end{proof}

Next, consider the global finite element space
$W_h^p(\om) = \{ w \in L^2(\om) : \; \d_t w$ and $\d_{xx} w$ are in
$L^2(\om)$ and $w|_K \in Q_p(K) $ for all $K \in \oh\}.$ Each element
$K \in \oh$ is obtained by mapping the reference element $\hK$ by
$T_K : \hat K \to K,$
$T_K(\hat x,\hat t ) = ( h\hat x +x_K, h\hat t + t_K)$, where
$(x_K, t_K)$ is the lower left corner vertex of $K,$ and the element
space $Q_p(K)$ is the pull back of the reference element space
$Q_p(\hK)$ under this map.  The space $W_h^p(\om)$ can be controlled
by a global set of degrees of freedom obtained by mapping the
reference element degrees of freedom and, as usual, coalescing those
that coincide at the mesh element interfaces.

On the reference element $\hK$, the degrees of freedom define an 
interpolation operator 
\[
\vPih w =  
\sum_{\sigma \in \vSig}\, \sigma (w ) \,\varphi_\sigma,
\]
where, as usual, $\{ \varphi_\eta \in Q_p(\hK): \eta \in \vSig\}$ is the set of 
shape functions obtained as the dual basis of~$\vSig$.  By the Sobolev
inequality in two dimensions, $\vPih: H^3(\hK) \to Q_p(\hK)$ is
continuous. Similarly, the global degrees of freedom define an
interpolation operator $\vPi: H^3(\om) \to W_h^p(\om)$ satisfying
\begin{equation}
  \label{eq:commute}
  (\vPi w)\circ T_K  =  \vPih (w\circ T_K).
\end{equation}

\begin{lemma} \label{lem:interp_err} %
If $w \in H^{\rev{p+1}}(\om)$, then for all $p\ge 3$, 
\begin{align*}
\| w -\vPi w \|_\om &  \leq C h ^{p+1} | w | _{H^{p+1}(\rev{\om})}, \\
\|\dt ( w - \vPi w) \|_\om & \leq C h ^{p} |w |_{ H ^{p+1} (\rev{\om}) },\\
\|\dxx ( w - \vPi w) \|_\om & 
\leq C h ^{p-1} |w |_{ H ^{p+1} (\rev{\om})}.
\end{align*}
\end{lemma}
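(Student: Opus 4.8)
The plan is to follow the classical mapping-plus-Bramble--Hilbert route for local interpolation estimates, exploiting the commuting relation~\eqref{eq:commute} to transfer everything to the reference element $\hK$, proving a single scale-invariant estimate there, and then undoing the affine scaling $T_K$ to recover the $h$-dependent bounds on each $K \in \oh$ before summing over the mesh. Since $p \ge 3$, we have the Sobolev embedding $H^{p+1}(\hK) \hookrightarrow H^3(\hK)$, so by the continuity of $\vPih : H^3(\hK) \to Q_p(\hK)$ already noted, the operator $\vPih$ is well defined and bounded on $H^{p+1}(\hK)$; this is precisely what makes the degrees of freedom in $\vSig$ (point values and point derivatives) meaningful on $H^{p+1}$.

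The first key step is a reference-element estimate. Because $\{\varphi_\sigma\}$ is the dual basis of $\vSig$, the operator $\vPih$ is a projection onto $Q_p(\hK)$; by the unisolvence established in Lemma~\ref{lem:unisolv} it reproduces every polynomial in $Q_p(\hK)$, and in particular every polynomial of total degree at most $p$. Thus $I - \vPih$ is a bounded linear map from $H^{p+1}(\hK)$ into $H^m(\hK)$ (for $m = 0,1,2$, using $m \le p+1$) that annihilates $P_p(\hK)$. The Bramble--Hilbert lemma (see~\cite{ciarletbook}) then yields a constant $C$, depending only on $p$, with
\[
\| \hat w - \vPih \hat w \|_{H^m(\hK)} \le C\, |\hat w|_{H^{p+1}(\hK)}, \qquad m = 0,1,2,
\]
for all $\hat w \in H^{p+1}(\hK)$.

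Next I would carry out the scaling. Writing $\hat w = w \circ T_K$, the commuting property~\eqref{eq:commute} gives $(w - \vPi w)\circ T_K = \hat w - \vPih \hat w$. Since $T_K$ is affine with constant Jacobian $h^2$ and contracts each coordinate by $h$, a change of variables shows that any $r$-th order derivative transforms with the factor $h^{1-r}$ in the $L^2(K)$ norm, and that $|\hat w|_{H^{p+1}(\hK)} = h^{p}\,|w|_{H^{p+1}(K)}$. Combining these with the reference estimate element by element produces $\|w - \vPi w\|_{L^2(K)} \le C h^{p+1}|w|_{H^{p+1}(K)}$ (the case $r=0$, $m=0$), $\|\dt(w - \vPi w)\|_{L^2(K)} \le C h^{p}|w|_{H^{p+1}(K)}$ (using $r=1$ and $m=1$), and $\|\dxx(w - \vPi w)\|_{L^2(K)} \le C h^{p-1}|w|_{H^{p+1}(K)}$ (using $r=2$ and $m=2$). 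Squaring, summing over all $K \in \oh$, and taking square roots gives the three stated global estimates over $\om$.

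I expect the only genuine subtlety to be the bookkeeping of $h$-powers, which is exactly what makes the three rates differ: the factor $h^{1-r}$ from differentiating the pulled-back error degrades the exponent from $p+1$ (for the function) to $p$ (for $\dt$) and to $p-1$ (for $\dxx$), so the spatial second derivative loses two orders of $h$ relative to $L^2$. The one hypothesis that must be checked rather than merely invoked is that $\vPih$ reproduces polynomials of total degree $p$ (and not only the tensor-product content one might naively read off the nodal pattern); this is where Lemma~\ref{lem:unisolv} is essential, since $Q_p(\hK) \supseteq P_p(\hK)$ guarantees the annihilation of $P_p$ needed for the Bramble--Hilbert step.
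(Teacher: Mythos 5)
Your proof is correct and follows essentially the same route as the paper's: pull back to the reference element via~\eqref{eq:commute}, apply the Bramble--Hilbert lemma using the continuity of $\vPih$ on $H^3(\hK)$ (hence on $H^{p+1}(\hK)$) together with its reproduction of $Q_p(\hK)\supseteq P_p(\hK)$, then undo the scaling with the factors $h^{1-r}$ and $|\hat w|_{H^{p+1}(\hK)} = h^{p}|w|_{H^{p+1}(K)}$ and sum over elements. The only cosmetic difference is that the paper bounds the full $H^3(\hK)$ norm of the reference-element error in one shot, whereas you bound the $H^m(\hK)$ norms for $m=0,1,2$ separately; the content is identical.
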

\begin{proof}
  Changing variables $(x,t) = T_K( \hat x, \hat t)$ as
  $(\hat x, \hat t)$ runs over $\hK,$ integrating, and using~\eqref{eq:commute},
  \begin{subequations}
    \label{eq:interpolations}
    \begin{align}
      \label{interpolations0}
      \| w - \vPi w \|_K & =h \| \hat w - \vPih \hat w \|_{\hat K } ,
      \\  
      \label{interpolations1}
      \| \dt ( w - \vPi w ) \|_K &  = \| \partial_ {\hat t }( \hat w - \vPih \hat w )   \|_{\hat K }  ,
      \\ 
      \label{interpolations}
      \| \dxx ( w - \vPi w ) \|_K
                         & = 
                           h^{-1}
                   \|\partial_{\hat x \hat x }( \hat w  - \vPih \hat w )  \|_{ \hat K}. 
    \end{align}    
  \end{subequations}
  On the reference element, since
  $ H^{p+1}(\hat K) \hookrightarrow H^3(\hat K),$ the interpolation
  operator $\vPih : H^{p+1}(\hK) \to Q_p(\hK)$ is continuous.
  Moreover $\vPih \hat w = \hat w $ for all $\hat w \in Q_p(\hK)$. Hence,
  the Bramble-Hilbert Lemma yields a $\hat C>0$ such that
  $\| \hat w - \vPih \hat w \|_{H^3(\hat K )} \leq \hat C | \hat w |_{
    H^{p+1} ( \hat K ) }$
  for all $\hat w \in H^{p+1}(\hK)$.  Since
  $| \hat w |_{H^{p+1}(\hat K ) } \leq C h^{p} |w | _ {H^{p+1}(K)},$
  combining with \eqref{eq:interpolations} and summing over all the
  elements in $\om_h$, we obtain the result.
\end{proof} 

Now we are ready to present the main result of this section. Set
$V_h = W_h^p(\om) \cap V$ and 
\begin{align}  \label{eq:UhQh}
  Q_h & = D_h (V_h), 
  & U_h &= \{ u \in L^2(\om) : u|_K \in Q_{p-1}(K) \text{ for all } K \in \om_h \}.
\end{align}

\begin{theorem} \label{thm:ee} %
  Let $p\ge 3$. Suppose $ u \in V\cap H^{\rev{p+1}}(\om)$ and $q = D_hu$ solve
  Problem~\ref{prb:weakSchrodinger} and suppose $U_h \times Q_h$ is
  set by~\eqref{eq:UhQh}.  Then, there exists a constant $C$ independent of
  $h$ such that the discrete solution $u_h \in U_h$ and $q_h \in Q_h$
  solving~\eqref{discrete} satisfies 
\begin{align}
\|u -u_h \|_\om + \|q-q_h \|_Q \leq C  h ^r |u |_{ H^{r+2}(\rev{\om})}
\end{align}
for $2 \leq  r \le  p-1 .$ 
\end{theorem}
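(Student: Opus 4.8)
The plan is to obtain the estimate from the mesh-independent quasioptimality of the ideal DPG method and then reduce the analysis to the interpolation properties of $\vPi$. Since Assumption~\ref{asm:dense} holds by Theorem~\ref{thm:dense}, Theorem~\ref{thm:uw_schrod} shows that Problem~\ref{prb:weakSchrodinger} is well posed with an inf-sup constant independent of $h$; the abstract DPG theory of~\cite{DemkoGopal11} then guarantees that the solution of~\eqref{discrete} is quasioptimal, i.e., there is a $C$ independent of $h$ with
\[
\|u-u_h\|_\om + \|q-q_h\|_Q \;\le\; C\Big( \inf_{z\in U_h}\|u-z\|_\om + \inf_{\eta\in Q_h}\|q-\eta\|_Q\Big).
\]
It therefore suffices to bound the two best-approximation errors. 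The $U_h$ term is standard: as $U_h$ consists of piecewise $Q_{p-1}$ polynomials, $L^2$ approximation theory gives $\inf_{z\in U_h}\|u-z\|_\om\le C\,h^{\min(p,\,r+2)}\|u\|_{H^{r+2}(\om)}$, which is of higher order than the claimed $h^r$.

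The crux is the $Q_h$ term, which I would handle using $Q_h=D_h(V_h)$ with $V_h=W_h^p(\om)\cap V$. Taking the candidate $D_h(\vPi u)\in Q_h$ (this membership is the key point discussed below) and recalling $q=D_h u$, the linearity of $D_h$ gives $\inf_{\eta\in Q_h}\|q-\eta\|_Q\le\|D_h(u-\vPi u)\|_Q$. Because $u\in V$ and $\vPi u\in V$, the difference $u-\vPi u$ lies in $V$, so $D_h(u-\vPi u)=D_{h,V}(u-\vPi u)$, and the definition of the quotient norm yields
\[
\|D_h(u-\vPi u)\|_Q \;\le\; \|u-\vPi u\|_W \;=\; \big(\|u-\vPi u\|_\om^2+\|A(u-\vPi u)\|_\om^2\big)^{1/2}.
\]
Since $A=i\dt-\dxx$ in one space dimension, $\|A(u-\vPi u)\|_\om\le\|\dt(u-\vPi u)\|_\om+\|\dxx(u-\vPi u)\|_\om$, reducing everything to interpolation bounds. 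To cover the whole range $2\le r\le p-1$ (and not merely the top order recorded in Lemma~\ref{lem:interp_err}), I would re-run its Bramble--Hilbert argument with $H^{r+2}(\hK)$ replacing $H^{p+1}(\hK)$: this is legitimate because $\vPih$ reproduces all of $Q_p(\hK)$, hence every polynomial of total degree at most $p$, and because $3\le r+2\le p+1$ keeps $H^{r+2}(\hK)\hookrightarrow H^3(\hK)$ so that $\vPih$ stays continuous. Scaling back to $K$ exactly as in~\eqref{eq:interpolations}, together with the seminorm scaling $|u\circ T_K|_{H^{r+2}(\hK)}\le C h^{r+1}|u|_{H^{r+2}(K)}$, then gives $\|u-\vPi u\|_\om\le C h^{r+2}|u|_{H^{r+2}(\om)}$, $\|\dt(u-\vPi u)\|_\om\le C h^{r+1}|u|_{H^{r+2}(\om)}$, and $\|\dxx(u-\vPi u)\|_\om\le C h^{r}|u|_{H^{r+2}(\om)}$, the second-derivative term dominating and carrying the asserted rate.

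The step I expect to be the main obstacle is the membership $\vPi u\in V_h$, i.e., that the interpolant inherits the homogeneous condition $u|_\vG=0$ defining $V$ (beyond lying in $W_h^p(\om)\subset W$). Since $u\in V\cap H^{p+1}(\om)$ with $p\ge3$ is regular enough for its trace on $\vG$ to vanish pointwise, I would check that $\vPi u$ vanishes on each edge of $\vG$ by examining the relevant degrees of freedom. On a vertical edge contained in the spatial boundary $\{0,L\}\times[0,T]$, the restriction of $\vPi u$ is a polynomial of degree at most $p$ in $t$ that equals $u$ at the $p+1$ nodes $t_j$ on the edge (through the Lagrange degrees of freedom lying on it); since $u$ vanishes there, so does the restriction, the Hermite data $\sigma_j^0,\sigma_j^1$ fixing only the normal ($x$-) derivative and hence not obstructing this. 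On the bottom edge $\om_0\times\{0\}$, the restriction is a polynomial of degree at most $p$ in $x$ determined by the vanishing Lagrange values $\sigma_{i0}(u)$ together with the Hermite values $\sigma_0^0(u),\sigma_0^1(u)$, which evaluate $\dx u$ at points on $t=0$ and also vanish because $u\equiv0$ along $t=0$ forces its tangential $x$-derivative to be zero there. Hence $\vPi u|_\vG=0$ and $\vPi u\in V_h$. With this in hand, the quotient-norm reduction and the interpolation estimates above combine with quasioptimality to yield $\|u-u_h\|_\om+\|q-q_h\|_Q\le C h^r|u|_{H^{r+2}(\om)}$.
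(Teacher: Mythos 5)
Your proposal is correct and follows essentially the same route as the paper's proof: quasioptimality of the ideal DPG method from \cite{DemkoGopal11}, the quotient-norm reduction $\inf_{r_h\in Q_h}\|q-r_h\|_Q\le\|u-\vPi u\|_W$ via $Q_h=D_h(V_h)$, and the interpolation estimates for $\vPi$. You are in fact more thorough than the paper at two points it leaves implicit: the verification from the degrees of freedom that $\vPi u$ vanishes on $\vG$ (so that $\vPi u\in V_h$), and the re-run of the Bramble--Hilbert argument with $H^{r+2}(\hK)$ regularity to cover the full range $2\le r\le p-1$, whereas Lemma~\ref{lem:interp_err} as stated records only the top-order case $r=p-1$.
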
 
\begin{proof}
  By~\cite[Theorem~2.2]{DemkoGopal11} the ideal DPG method is quasioptimal:
  \begin{align*}
  \| (u,q) - (u_h,q_h) \|_{U \times Q}^2
    & \le \; C \inf_{ (z_h, r_h) \in U_h \times Q_h }  
      \| (u,q) - (z_h,r_h) \|_{U \times Q}^2
    \\
    & = C 
      \inf_{ (z_h, r_h) \in U_h \times Q_h }  \left( 
      \| u - z_h \|_\om^2 + \| q - r_h\|_Q^2 \right).
  \end{align*}
  Because of the standard approximation estimate
  $ \inf_{z_h \in U_h} \| u - z_h\|_\om \le Ch^r |u|_{H^r(\om)}$ for
  $0 \le r \le p-1,$ it suffices to focus on $\|q - r_h\|_Q$.  Since
  $q = D_h u,$ by the definition of $Q$-norm~\eqref{eq:Qnorm}, and the
  fact that any $r_h$ in $Q_h$ equals $D_h v_h$ for some
  $v_h \in V_h$, we have
  \begin{align*}
    \inf_{r_h \in Q_h} \| q - r_h \|_Q 
    & \le  \inf_{v_h \in V_h} \| u - v_h\|_W \le \| u - \vPi u\|_W.
  \end{align*}
  Applying Lemma~\ref{lem:interp_err}, the result follows.
\end{proof}

We conclude this section by examining a property of $Q_h$ that is
useful for computations.  Let $\eh^{\shortmid}$ and $\eh^{\mbox{-}}$
denote the set of vertical and horizontal (closed) mesh edges,
respectively, and $\eh^+ = \eh^\shortmid \cup \eh^{\mbox{-}}$. Let
$E_h^\shortmid$ and $E_h^+$ denote the closed set formed by the union
of all edges in $\eh^\shortmid$ and $\eh^+$, respectively.  Let
$Q_h^\shortmid =\{ r \in L^2(E_h^\shortmid): r|_F \in P_p(F)$ for all
$F \in \eh^\shortmid \}$ and $Q_h^+ =\{ r \in L^2(E_h^+) : r$ is continuous on
$E_h^+$ and $ r|_F \in P_p(F)$ for all $F \in \eh^+$ and
$r|_{\vG} =0 \}.$ For any $v_h \in V_h$, since $v_h$ is a polynomial
on each element, we may integrate by parts element by element to get
\begin{align*}
  \ip{D_h v_h, \psi}_h
  & = (A_h v_h, \psi)_h - (v_h, A_h \psi)_h \\
  & = \sum_{K\in \oh} \int_{\d K} i n_t v_h \bar \psi  
    + \int_{\d K} v_h  n_x (\d_x \bar \psi) 
    - \int_{\d K} n_x (\d_x v_h) \bar \psi
\end{align*}
for all $\psi \in \D(\bar \om)$. 
Thus $q = D_h v_h$ satisfies 
\begin{align*}
  \ip{q, \psi}_h
  & = \sum_{K\in \oh} \int_{\d K} q^+ (i n_t \bar \psi )
    + \int_{\d K} q^+ n_x (\d_x \bar \psi) 
    - \int_{\d K} q^\shortmid (n_x \bar \psi),
\end{align*}
where $q^+ = v_h|_{E_h^+}$ and
$q^\shortmid = \d_x v_h|_{E_h^\shortmid}.$ In computations, one may
therefore identify $Q_h$ with the interfacial polynomial space
$Q_h^+ \times Q_h^\shortmid$ whose components are of degree at most~$p$.

\section{Numerical Results}\label{sec:results}

This section is motivated by our interest in simulating
electromagnetic pulse propagation in dispersive optical fibers.
Nonlinear, dispersive Maxwell equations in the context of optical
fibers have been studied extensively~\cite{agrawal1}. The common
approach to model dispersive, intensity-dependent nonlinearities is
based on several simplifying approximations. These approximations
include a slowly varying pulse envelope, a quasi-monochromatic optical
field, a specific polarization maintained along the fiber length, and
approximation of nonlinear terms as perturbations of the purely linear
case. With these assumptions, the full Maxwell equations are
reduced~\cite{agrawal1,shaw04} to the ``nonlinear Schr{\"o}dinger
equation''
\[
i \frac{\partial a}{\partial x} - \frac{\beta}{2} \frac{\partial^2 a}{\partial t^2} + \gamma |a|^2 a = 0,
\]
where $x$ is the distance along the fiber, $t$ is an observation
window (in time), $\beta$ is some given fiber-dependent constant, and
$a$ is a complexified amplitude of the pulse.

Since the roles of $x$ and $t$ in this application may be
confusing, we switch them to agree with the previous sections and
consider the simple case of $\gamma=0.$ 
In other words, we present numerical
results obtained using a practical DPG method applied to the one
dimensional Schr\"odinger problem
\begin{align*}
i{\partial _t}u - \frac{\beta}{2}{\partial_{xx}}u &= f, \qquad 0<x<1, 0<t<1,\\
u(x,0) & = u_0 (x), \qquad 0<x<1, \\
u(0,t) & = u_l(t), \qquad 0 <t<1, \\
u(1,t) &  = u_r(t), \qquad 0<t<1.
\end{align*}

To describe the method we used in practice, first set $ b( (u,q), v)$
using the Schr\"odinger operator $A = i \d_t - (\beta/2) \d_{xx}$ and
recall~(\ref{discrete}).  As mentioned in the previous section, the
action of any $q=D_h u$ on the boundary of each element, can be viewed
as a combination of two \emph{independent} boundary actions of
variables $q^+$ and $q^\shortmid$ that are of the same polynomial
order.  However, we are led to implement a slightly different space
because our computational tool is a standard Petrov-Galerkin code
supporting the exact sequence elements of the first type~\cite{oesf}.
Accordingly, $q^+$ is discretized with (continuous) traces of $H^1$
conforming elements of order $p$ but $q^\shortmid$ is discretized with
(discontinuous) traces of the compatible $H(\text{div})$ conforming
elements of order $p-1$, i.e., one order less than required by the
presented interpolation theory. Let $\tilde Q_h$ represent this
reduced space. The next modification needed in our implementation
is an approximation of~$T$. Let $T_h(z,r) \in W_h^{\Delta p}$ be defined by
$( T_h(z,r), v ) = b((z,r),v)$ for all $v \in W_h^{\Delta p}$ and any
$(z,r) \in U_h \times \tilde Q_h$, where
$W_h^{\Delta p} = \{ w \in W_h: w|_K \in Q_{p+\Delta p}(K)$ for all
$K\in \oh\}$. Thus the practically implemented method, in contrast
to~(\ref{discrete}), finds $u_h \in U_h$ and $q_h\in \tilde Q_h$
satisfying
\[
b((u_h, q_h), v) = F(v) 
\] 
for all $v \in T_h(U_h\times \tilde Q_h )$.  In all the results
presented below, no significant differences were seen between $\Delta
p =1$ and $\Delta p=2$, so we only report the results 
obtained with $\Delta p=1$.

\begin{figure}
\includegraphics[width=6cm, trim={0 5.5cm 0 3cm}, clip]{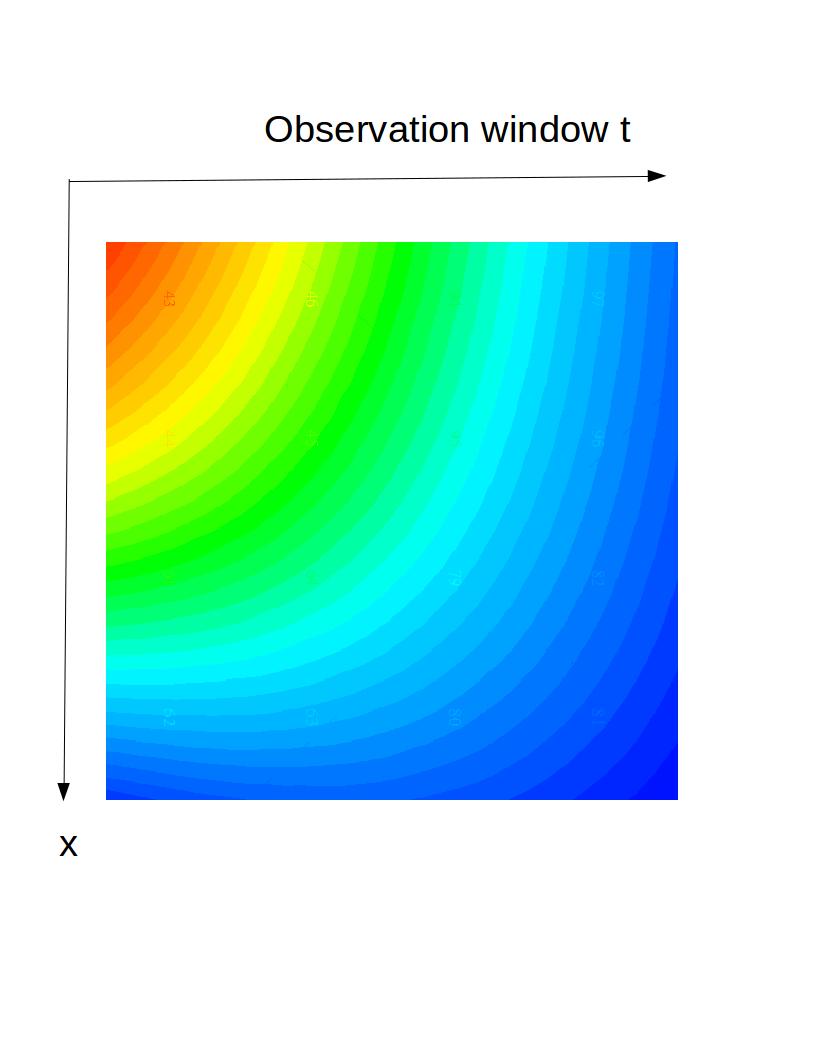}
\includegraphics[width=6cm, trim={0 5.5cm 0 3cm}, clip]{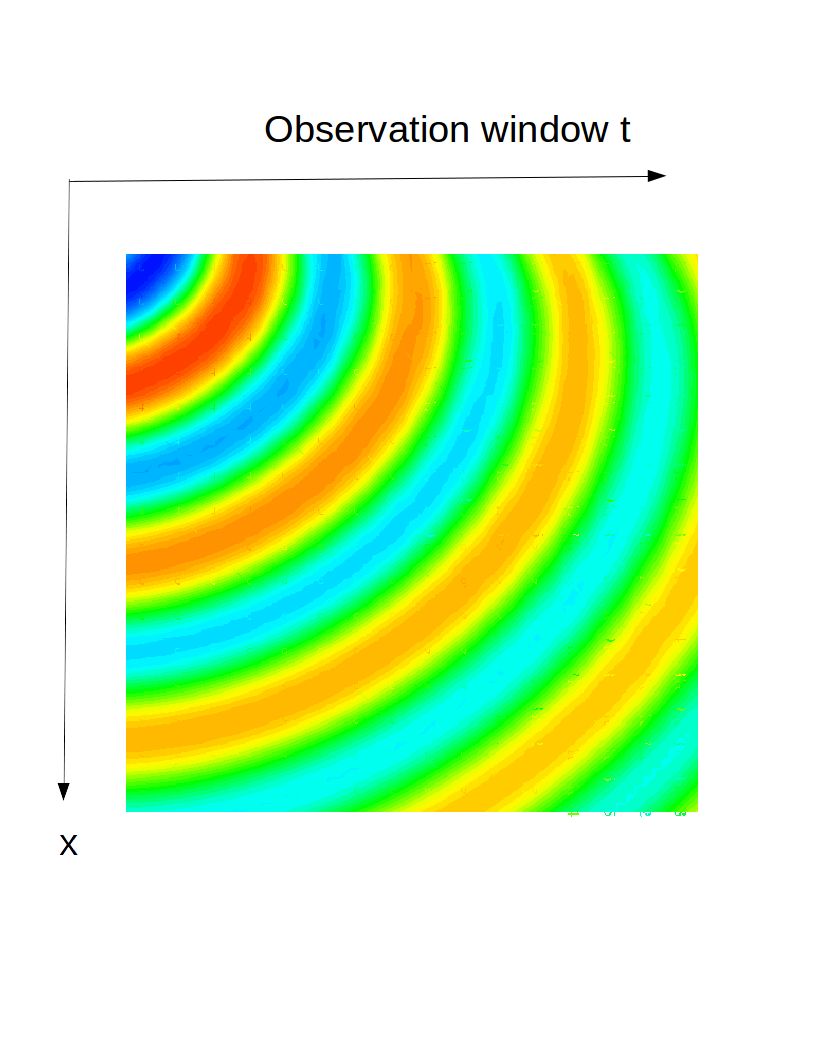}
\caption{Plots of solutions. {\em Left:} Case {\em (a)}. {\em Right:}
  Case {\em (b)}.}
\label{fig:solution}
\end{figure}

\begin{figure}[t]
\includegraphics[height=5.5cm]{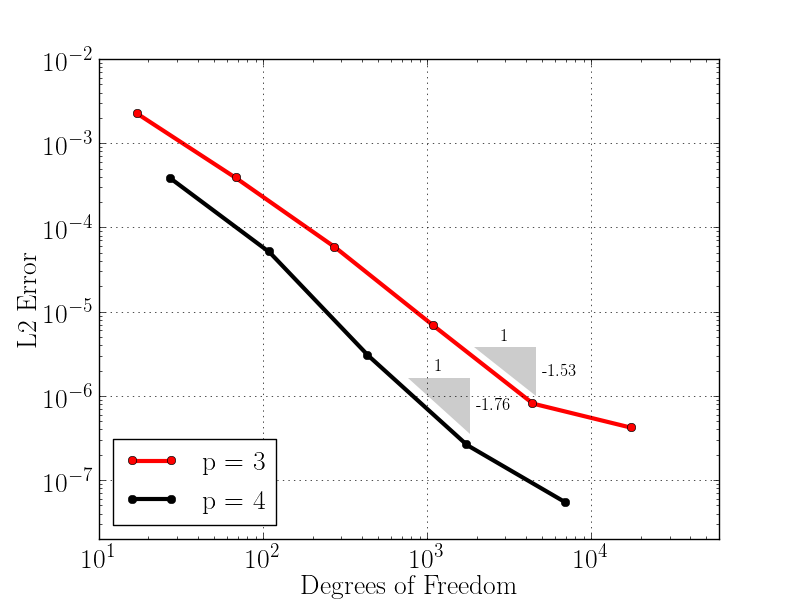}
\includegraphics[height=5.5cm]{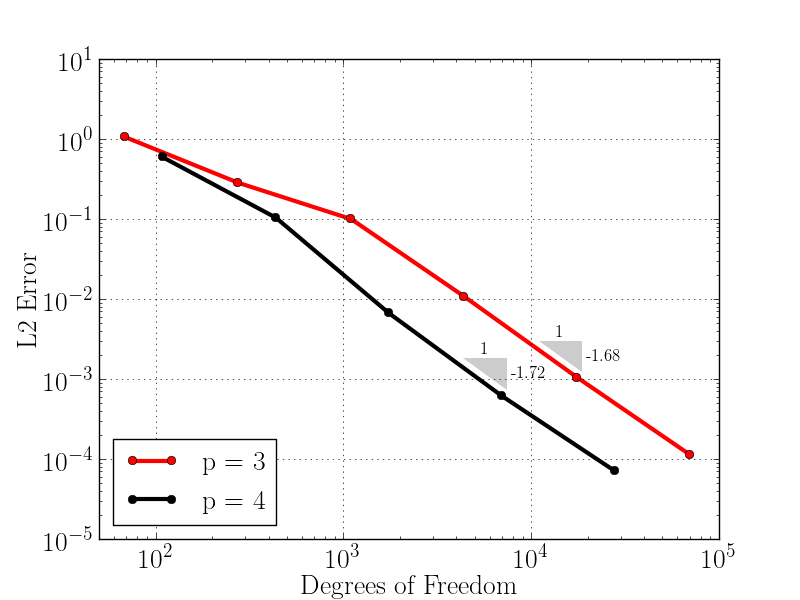}
\caption{Rates of convergence. {\em 
Left:} Results from case {\em (a)} with the 
complex Gaussian solution. {\em Right:} Results from the case {\em
  (b)} with the wave packet of frequency $\og=20$.}
\label{fig:rates}
\end{figure}

We report the observed rates of convergence for two problems: {\em
  (a)}~The first case is when the exact solution is a complex Gaussian
\begin{equation}\label{eq:leftprob}
u(x,t) = \frac{M T_0}{\sqrt{T_{0}^{2}-i \beta t }}e^{-\frac{x^2}{T_{0}^{2}-i \beta t }},
\end{equation}
where $M,T_0, $ and $\beta$ are fiber-dependent constants (see
\cite{shaw04}). Our simulations used non-dimensionalized units of
$M=T_0=1.5$ and $\beta = 2.5$.  {\em (b)}~The second example uses an
exact solution which is a wave packet traveling along the fiber whose
in-packet oscillations are of moderately high wavenumber~$\og$, namely
\begin{equation}
u(x,t) = a_{0} e^{-\frac{x^2+t^2}{\omega^{2}}}, 
\end{equation}
where the amplitude $a_0 = (2/\og^2)^{1/4}
$
and the wavenumber is $\omega = 20$.
Plots of solutions in either case are displayed in
Figure~\ref{fig:solution}.  

The observed convergence rates are
displayed in Figure~\ref{fig:rates} (the left plot shows results from
case~{\em (a)} and the right plot shows results from case~{\em(b)}.
We experiment with $p=3$ and $p =4$ cases.  For the ideal DPG method
using the $U_h \times Q_h$ in~\eqref{eq:UhQh}, Theorem~\ref{thm:ee}
implies that the convergence rate in terms of the number of degrees of
freedom $n = O(h^{-2})$ is $O(n^{-s})$ where $s=(p-1)/2$. We observe
from Figure~\ref{fig:rates} that in spite of reducing $Q_h$ to
$\tilde Q_h$ and in spite of approximating $T$ by $T_h$, we continue
to observe a rate higher than~$s$. Namely, in the $p=3$ case, while we
expected a rate of $s \le 1$, the observed rate is $s\approx 1.5$. In
the $p=4$ case, while the expected rate is $s \le 1.5$, the observed
rate is between $1.5$ and $2$. An improved error analysis explaining
these observations is yet to be found.

Note the flattening out of the curves in left plot of
Figure~\ref{fig:rates}.  This is due to conditioning issues.  As with
any method using second order derivatives, we should be wary of
conditioning.  Indeed, the DPG system with $p=3$ or $p=4$, after 4 or
5 uniform refinements, has a condition number in the vicinity of
$O(10^{10})$. Therefore, the roundoff effect becomes apparent after we
achieve an error threshold around $10^{-6}$ or $10^{-7}$. This is the
cause of convergence curves flattening out in case {\em (a)}.  In case
{\em (b)}, due to $\omega =20$, we start with a higher error so the
loss of digits due to conditioning issues is postponed.

\appendix
\section{Abstract weak formulation}
\label{apn:weak}

In this section, we consider a boundary value problem involving a
general partial differential operator. We derive a mesh-dependent weak
formulation of the boundary value problem and show that it is possible
to identify sufficient conditions for its wellposedness. This section
can be read independently of the remainder of the paper.

Let $\om \subseteq \RRR^d$ be a bounded open set in $d \ge 1$
dimensions and let $k, l, m\ge 1$ be integers.  
Let \rev{$A$ be a differential operator} such that 
$i$th component of $A u$ is
  \begin{equation}
    \label{eq:A1}\tag{A-a}
    [A u]_i = \sum_{j=1}^m \sum_{|\alpha|\le k} \d^\alpha ( a_{ij\alpha} u_j),    
  \end{equation}
  where $a_{ij\alpha}: \om \to \CCC$ are functions for all
  $i=1,\ldots, l$, $j=1,\ldots, m,$ and all multi-indices
  $\alpha=(\alpha_1,\ldots \alpha_d)$ whose length is
  $|\alpha| = \alpha_1 + \cdots + \alpha_d \le k$. As
  usual, $\d^\alpha = \d_1^{\alpha_1} \cdots \d_d^{\alpha_d}$. 
  \rev{The formal adjoint of $A$ is 
    given by
    \begin{equation}
      \label{eq:A*}
[A^* v ]_j = \sum_{i=1}^l
\sum_{|\alpha| \le k} (-1)^{|\alpha|} \overline{a_{ji\alpha}}\, \d^\alpha  v_j .      
    \end{equation}
}
We assume that the coefficients $a_{ij\alpha}$ are such that 
\begin{equation}
  \label{eq:A3}\tag{A-b}
  A^*u \in \D'(\om)^m \quad  
  \text{ for all } u \in L^2(\om)^l.
\end{equation}
For example,~\eqref{eq:A3} is satisfied if 
$a_{ij\alpha}$ \rev{are smooth.}

For any nonempty open subset $S \subseteq \om,$ define the space 
\begin{equation}
  \label{eq:W}
   W(S) = \{ u \in L^2(S)^m: A u \in L^2(S)^l \},
\end{equation}
normed by
$ \| u \|_{W(S)} = \left( \| u \|_S^2 + \| A u \|_S^2 \right)^{1/2},$
and the space 
\begin{equation}
  \label{eq:W*}
W^*(S) = \{ u \in L^2(S)^l: A^* u \in L^2(S)^m \},  
\end{equation}
normed by
$ \| u \|_{W^*(S)} = \left( \| u \|_S^2 + \| A^* u \|_S^2
\right)^{1/2}.$
\rev{Here and throughout,
$(\cdot,\cdot)_\om$ and $\| \cdot \|_\om$ denote the inner product
and the norm, respectively, in $L^2(\om)$ or its Cartesian
products. } 
To simplify notation, we abbreviate $W = W(\om)$, $W^* = W^*(\om)$.
Clearly these are inner product spaces.

\begin{lemma}
The spaces $W(S)$ and $W^*(S)$ are  Hilbert spaces.
\end{lemma}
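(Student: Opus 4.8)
The plan is to prove completeness, since these are already established to be inner product spaces (with inner product $(u,v)_{W(S)} = (u,v)_S + (Au,Av)_S$, and analogously for $W^*(S)$). I would argue only for $W(S)$: because the formal adjoint $A^*$ in~\eqref{eq:A*} has exactly the same differential structure as $A$ in~\eqref{eq:A1}, the identical argument proves completeness of $W^*(S)$ upon replacing $A$ by $A^*$.

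First I would take an arbitrary Cauchy sequence $(u_n)$ in $W(S)$. By the definition of the norm, $(u_n)$ is Cauchy in $L^2(S)^m$ and $(Au_n)$ is Cauchy in $L^2(S)^l$; completeness of the Lebesgue spaces then furnishes limits $u \in L^2(S)^m$ and $g \in L^2(S)^l$ with $u_n \to u$ and $Au_n \to g$ in the respective norms. The entire task reduces to identifying the two limits, i.e. to showing $Au = g$: granting this, $u$ lies in $W(S)$ and $\|u_n - u\|_{W(S)}^2 = \|u_n - u\|_S^2 + \|Au_n - g\|_S^2 \to 0$, which is completeness.

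The crux is that $A$ is sequentially continuous as a map $L^2(S)^m \to \D'(S)^l$. I would verify this directly from~\eqref{eq:A1}: for any test function $\varphi \in \D(S)$, integration by parts gives $\langle [Aw]_i, \varphi\rangle = \sum_{j,\alpha}(-1)^{|\alpha|}\int_S a_{ij\alpha}\, w_j\, \d^\alpha\varphi$, whence $|\langle [Aw]_i,\varphi\rangle| \le \sum_{j,\alpha}\|a_{ij\alpha}\,\d^\alpha\varphi\|_S\,\|w_j\|_S$ by Cauchy--Schwarz. Thus $w \mapsto \langle Aw, \varphi\rangle$ is $L^2(S)$-continuous, so $u_n \to u$ in $L^2(S)^m$ forces $Au_n \to Au$ in $\D'(S)^l$. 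Since $Au_n \to g$ in $L^2(S)^l$ and therefore also in $\D'(S)^l$, uniqueness of distributional limits yields $Au = g \in L^2(S)^l$, completing the proof.

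The only point requiring care is the finiteness of $\|a_{ij\alpha}\,\d^\alpha\varphi\|_S$ in the estimate above; this is precisely the integrability that the coefficient hypothesis supplies (it holds in particular when the $a_{ij\alpha}$ are smooth, as remarked after~\eqref{eq:A3}, which is also what renders~\eqref{eq:A3} meaningful). Everything else — the reduction to $L^2$-completeness and the uniqueness of limits in $\D'(S)$ — is routine. I therefore expect no genuine obstacle beyond this continuity observation, which is in essence the statement that $A$ is a closed operator on $L^2(S)$ and $W(S)$ is its graph.
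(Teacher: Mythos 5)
Your proof is correct and follows essentially the same route as the paper: reduce completeness to that of the $L^2$ spaces, then identify the distributional limit $Au=g$ by pairing with test functions via the formal adjoint. Your Cauchy--Schwarz estimate $|\langle [Aw]_i,\varphi\rangle| \le \sum_{j,\alpha}\|a_{ij\alpha}\,\d^\alpha\varphi\|_S\,\|w_j\|_S$ is just an explicit unwinding of the paper's identity $\langle Aw,\phi\rangle_{\D(S)^l} = (w,A^*\phi)_S$ together with continuity of the $L^2$ inner product, so the two arguments coincide in substance.
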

\begin{proof}
  Since the proofs for $W(S)$ and $W^*(S)$ are similar, we only show
  the first.  Suppose $u_n$ is a Cauchy sequence in $W\rev{(S)}$. Then $u_n$ is
  Cauchy in $L^2(S)^m$ and $A u_n$ is Cauchy in $L^2(S)^l$. Hence
  there is a $u \in L^2(S)^m$ and $f \in L^2(S)^l$ such that
  $\| u - u_n \|_S \to 0$ and $\| f - A u_n \|_S \to 0$.
  We will show that $u$ is in $W(S).$
  
  Let $\phi \in \D(S)^l$.  For each $w \in L^2(S)^m$, the
  distributional action of $Aw$ on $\bar \phi$, denoted by
  $\ip{ Aw, \phi}_{\D(S)^l},$ equals $(w, A^* \phi)_S$. If $w$ is
  also in $W(S)$, then
  \begin{equation}
    \label{eq:Aw}
    (A w, \phi)_S = \ip{ Aw, \phi}_{\D(S)^l} =(w, A^* \phi)_S
  \end{equation}
  for all $\phi$ in $\D(S)^l$.  To complete the proof, we
  apply~\eqref{eq:Aw} with $w = u_n$ to get
  \begin{align*}
    \ip{ Au, \phi}_{\D(S)^l}
    & = (u, A^*\phi)_S = 
      \lim_{n\to \infty} (u_n, A^* \phi)_S
      = 
      \lim_{n\to \infty} (Au_n, \phi)_S = (f, \phi)_S
  \end{align*}
  for all $\phi$ in $\D(S)^l$. Hence $Au =f $,
  and $u$ is in $W(S)$.
\end{proof}

Next, define bounded linear operators
\rev{$D_S: W(S) \to W^*(S)'$ and $D^*_S: W^*(S) \to W(S)'$ by
\begin{align}
  \label{eq:D}
  \ip{ D_Sw, \wt}_{W^*(S)} &= (Aw, \wt)_S - (w, A^*\wt)_S,
  \\\label{eq:D*}
  \ip{ D_S^*\wt, w}_{W(S)} & = (A^* \wt, w)_S - (\wt, A w)_S,
\end{align}
for all $w \in W(S)$ and $\wt \in W^*(S)$. When $S=\om$, we 
abbreviate $D_S$ and $D^*_S$ to $D$ and $D^*$, respectively.}  Here, like in
\eqref{eq:Aw}, and in the remainder, we use $\ip{\cdot,\cdot}_X$ to
denote the action of a linear functional in $X'$ on an element of $X$.

\rev{Next we will view $A : \dom(A) \subset L^2(\om)^m \to L^2(\om)^l$
as an unbounded 
linear operator, whose domain (denoted by $\dom(A)$)} is chosen so that 
\begin{equation}
  \label{eq:A2}\tag{A-c}
  \D(\om)^m \subseteq \dom(A). 
\end{equation}
This implies that $A$ is a densely defined operator.  Then,
identifying the dual of (Cartesian products of) $L^2(\om)$ with
itself, recall that the adjoint
\rev{$A^*: \dom(A^*) \subset L^2(\om)^l \to L^2(\om)^m$} is a uniquely
defined (unbounded) closed linear operator~\cite{Brezi11,OdenDemko10}
on $ \dom(A^*) = \{ s \in L^2(\om)^l:$ there is an
$\ell \in L^2(\om)^m $ such that $(Av, s)_\om = (v, \ell)_\om$ for all
$v \in \dom(A)\},$ satisfying $ (Av, \vt)_\om = (v, A^* \vt)_\om$ for
all $ v \in \dom(A)$ and $ \; \vt \in \dom(A^*).$ Note that
$\D(\om)^l \subseteq \dom(A^*)$.  \rev{Note also that by an abuse of
  notation, we have used $A^*$ to denote both the differential
  operator in~\eqref{eq:A*} and the adjoint operator of the unbounded
  $A$.}

When $\dom (A)$ is endowed with the topology of $W(\om)$, we call it
$V$, i.e., although $V$ and $\dom(A) $ coincide as sets, $V$ has the
topology of $W(\om)$ and $\dom(A)$ has the topology of
$L^2(\om)^m$. Similarly, $\dom(A^*)$ is called $V^*$ when it is
endowed with the topology of $W^*(\om)$. 
For the next result, recall that 
the left annihilator of any subspace $R$ of the dual space $X'$ of any Banach
space $X$ is defined by 
$ \lprp R = \{ w\in X: \ip{ s',w}_X=0\text{ for all }s'\in R\}.$

\begin{lemma} \label{lem:V1}
  \rev{In the setting of~\eqref{eq:A1}, \eqref{eq:A3} and \eqref{eq:A2}, we have} 
  $
  V^* = \lprp{D(V)}.
  $
\end{lemma}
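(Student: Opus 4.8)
The plan is to prove the set equality by establishing both inclusions, each of which reduces to unwinding the definition of the left annihilator, the definition~\eqref{eq:D} of $D$, and the defining property of the adjoint operator $A^*$. I expect the one genuinely substantive point to be the reconciliation of the two objects denoted $A^*$: the formal differential operator of~\eqref{eq:A*}, whose $L^2$-boundedness defines membership in $W^*$, and the Hilbert-space adjoint of the unbounded operator $A$, whose domain defines $\dom(A^*)=V^*$. The hypothesis~\eqref{eq:A2} that $\D(\om)^m \subseteq \dom(A)$ is precisely what forces these to agree on $\dom(A^*)$, and this is the crux of the argument.

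First I would record a concrete description of the annihilator. Since $\lprp{D(V)}$ is by definition a subset of $W^*$, and since~\eqref{eq:D} gives $\ip{Dv,\wt}_{W^*} = (Av,\wt)_\om - (v,A^*\wt)_\om$, a function $\wt \in W^*$ lies in $\lprp{D(V)}$ exactly when
\[
(Av,\wt)_\om = (v, A^*\wt)_\om \qquad \text{for all } v \in V = \dom(A),
\]
where on the right $A^*\wt$ denotes the differential (hence $L^2$) action of $A^*$ on $\wt \in W^*$.

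For the inclusion $\lprp{D(V)} \subseteq V^*$ I would take $\wt \in \lprp{D(V)}$ and set $\ell := A^*\wt \in L^2(\om)^m$. The displayed identity then reads $(Av,\wt)_\om = (v,\ell)_\om$ for all $v \in \dom(A)$, which is verbatim the defining condition for $\wt$ to belong to $\dom(A^*) = V^*$; this direction is immediate.

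For the reverse inclusion $V^* \subseteq \lprp{D(V)}$ I would take $\wt \in \dom(A^*)$ and let $\ell \in L^2(\om)^m$ be the element guaranteed by the definition of the adjoint, so that $(Av,\wt)_\om = (v,\ell)_\om$ for all $v \in \dom(A)$. Restricting $v$ to test functions $\phi \in \D(\om)^m$ --- legitimate by~\eqref{eq:A2} --- and passing to complex conjugates, I would identify the left-hand side $(\wt, A\phi)_\om$ with the distributional action $\ip{A^*\wt,\phi}_{\D(\om)^m}$, obtaining $\ip{A^*\wt,\phi}_{\D(\om)^m} = (\ell,\phi)_\om$ for every such $\phi$. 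This shows that the distribution $A^*\wt$ coincides with $\ell \in L^2(\om)^m$, so $\wt$ is in $W^*$ with differential $A^*\wt = \ell$. Substituting back gives $\ip{Dv,\wt}_{W^*} = (Av,\wt)_\om - (v,\ell)_\om = 0$ for all $v \in V$, i.e.\ $\wt \in \lprp{D(V)}$. The only nonroutine step is this last identification of the operator adjoint with the differential adjoint; the remainder is bookkeeping with the complex $L^2$ inner product.
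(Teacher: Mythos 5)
Your proof is correct and follows essentially the same route as the paper's: for $V^* \subseteq \lprp{D(V)}$ you, like the paper, take the adjoint identity $(Av,\wt)_\om = (v,\ell)_\om$, restrict to $\D(\om)^m$ via~\eqref{eq:A2}, use~\eqref{eq:A3} to identify the distribution $A^*\wt$ with $\ell$ so that $\wt \in W^*$, and then conclude $\ip{Dv,\wt}_{W^*}=0$. The only difference is that you also write out the inclusion $\lprp{D(V)} \subseteq V^*$ in full, which the paper dismisses as ``easy to prove''; your argument for it is exactly the intended one.
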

\begin{proof}
  According to the (above-mentioned) definition of $\dom(A^*)$,
  for any $\vt \in \dom(A^*) = V^*$, there is an
  $\ell \in L^2(\om)^l$ such that 
  \begin{equation}
    \label{eq:2}
    (Av,\vt)_\om = (v,\ell)_\om, \qquad \forall v \in V.
  \end{equation}
  Due to~\eqref{eq:A2}, we may choose $v$ in
  $\D(\om)^m$. By~\eqref{eq:A3}, $A^*\vt$ is a distribution and
  by~\eqref{eq:2} this distribution is in $L^2(\om)^m$ and equals
  $\ell.$ In particular, $\vt$ is in $W^*(\om)$. Hence~\eqref{eq:D} is
  applicable, and in combination with~\eqref{eq:2} yields
  $ \ip{ D v, \vt}_{W^*(\om)} = (Av, \vt)_\om - (v, \ell)_\om = 0 $
  for all $v \in V$.  Hence $\vt \in \lprp{ D(V)}$ and we have proved
  that $V^* \subseteq \lprp{D(V)}$. The reverse containment is also
  easy to prove.
\end{proof}

We are interested in the boundary value problem of finding $u$
satisfying
\begin{equation}
  \label{eq:bvp}
  A u  = f,\qquad
  u  \in  \,V,
\end{equation}
given $f \in L^2(\om)^l$.  Homogeneous boundary conditions are
incorporated in $V.$ Consider the scenario where $\om$ is partitioned
into a mesh $\oh$ of finitely many open elements $K$ such that
$\bar \om = \cup_{K \in \oh} \bar K$. Here the index $h$ denotes
$\max_{K \in \oh} \diam(K)$.  \rev{Recall} $D_K$ and $D_K^*$ by replacing
\rev{$S$} by $K$ in~\eqref{eq:D} and~\eqref{eq:D*}.  
Additionally, set
\begin{align}
\label{eq:WhWh*}
W_h&=\prod_{K\in \oh} W(K),
&
(W_h^*)'&=\prod_{K\in\oh} W^*(K)'.
\end{align}
The spaces $W_h^*$
and $W_h'$ are defined similarly. The component on an element $K$ of
functions in such product spaces are indicated by placing $K$ as
subscript, e.g., for any $w$ in $W_h$, the component of $w$ on element
$K$ is denoted by $w_K$.  Let $D_h: W_h \to (W_h^*)'$ be the
continuous linear operator defined by
\[
\ip{ D_h w, v }_{W_h^*} = \sum_{K\in\oh} \ip{ D_K w_K, v_K}_{W^*(K)}
\]
for all $w \in W_h$ and $v \in W_h^*$.  To simplify notation, we
abbreviate
$\ip{D_h w, v}_{W_h^*}$ to $\ip{D_h w, v}_h$, i.e., duality pairing in
$W_h^*$ is simply denoted by $\ip{ \cdot, \cdot}_h$.  For any
$w \in W_h$, we denote by $A_h w$ the function obtained by applying $A$
to $w_K$, element by element, for all $K \in \oh$. The resulting
function $A_h w$ is an element of $\Pi_{K\in \oh}L^2(K)^l$, which is identified to be the same as $L^2(\om)^l$.  The operator
$A_h^*: W_h^* \to L^2(\om)^m$ is defined similarly. Thus
\begin{equation}
  \label{eq:Dh}
  \ip{ D_h w, v }_h = (A_h w, v)_\om - (w, A_h^* v)_\om
\end{equation}
for all $w \in W_h$ and $v \in W_h^*$. 

\begin{lemma}
  \label{lem:1}
  For all $w\in W$ and $v \in W^*$,  we have
  $
  \ip{ D_h w,v}_h = \ip{ D w, v}_{W}.
  $
\end{lemma}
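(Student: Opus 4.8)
The plan is to reduce both sides of the asserted identity to the single expression $(Aw, v)_\om - (w, A^* v)_\om$ by comparing the two integration-by-parts formulas already recorded in the excerpt. First I would observe that any $w \in W = W(\om)$ automatically lies in $W_h$: the global condition $Aw \in L^2(\om)^l$ forces $A(w|_K) \in L^2(K)^l$ on every element $K \in \oh$. Likewise every $v \in W^* = W^*(\om)$ lies in $W_h^*$. Thus the pairing $\ip{D_h w, v}_h$ on the left-hand side is meaningful for the given $w$ and $v$, and formula~\eqref{eq:Dh} applies, giving $\ip{D_h w, v}_h = (A_h w, v)_\om - (w, A_h^* v)_\om$.

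The key step is to identify the element-by-element operators with the global ones on these restricted spaces, namely to show that $A_h w = Aw$ and $A_h^* v = A^* v$ as elements of $L^2(\om)^l$ and $L^2(\om)^m$, respectively. This is precisely the locality of distributional differentiation: for any element $K$ and any test function $\phi \in \D(K)^l$ extended by zero to $\om$, the distributional action of the global $Aw$ on $\bar\phi$ coincides with that of $A(w|_K)$, because $A$ is the differential operator given by~\eqref{eq:A1} and carries no nonlocal terms. Since both $Aw$ and $A_h w$ are represented by $L^2$ functions, and they agree when tested against all such $\phi$, their restrictions to $K$ coincide a.e.; as $K$ ranges over the finitely many disjoint open elements of $\oh$ whose closures cover $\om$, we conclude $A_h w = Aw$ a.e.\ on $\om$. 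The same argument gives $A_h^* v = A^* v$.

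Finally I would substitute these identities into~\eqref{eq:Dh} and compare with the definition~\eqref{eq:D} of $D$, obtaining
\[
\ip{D_h w, v}_h = (A_h w, v)_\om - (w, A_h^* v)_\om = (Aw, v)_\om - (w, A^* v)_\om = \ip{Dw, v}_W,
\]
where the last equality is exactly the defining relation~\eqref{eq:D} for the global boundary operator (the pairing being the canonical duality between $(W^*)'$ and $W^*$). This is the claimed identity.

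I expect the only subtle point to be the locality identification $A_h w = Aw$; every other step is a direct substitution of the two formulas. This step is standard but deserves to be stated explicitly, since it is precisely what makes the element-wise boundary operator $D_h$ an extension of the global operator $D$ — the compatibility on which the passage from the strong to the ultraweak formulation relies.
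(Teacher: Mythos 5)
Your proof is correct and follows essentially the same route as the paper's: both reduce the claim to the identities $A_h w = Aw$ and $A_h^* v = A^* v$ and then substitute into~\eqref{eq:Dh} and~\eqref{eq:D}. The only difference is that the paper asserts these identities without comment, while you justify them via locality of distributional differentiation --- a worthwhile detail, and your argument for it is sound.
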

\begin{proof}
  If $w \in W$ and $\rev{v} \in W^*$, then $ A_h w = Aw$ and
  $A_h^* \rev{v} = A^* \rev{v}$. Using this in~\eqref{eq:Dh},
  $    \ip{ D_h w,v}_h
  = (Aw,v)_\om - (w, A^* v)_\om
      = \ip{D w,v}_{W^*}$
whenever \rev{$w$ is in $W$ and $v$ is in $W^*$.}
\end{proof}

To derive the mesh-dependent weak formulation, multiply~\eqref{eq:bvp}
by a test function $v \in W_h$ and apply the definition of
$D_K$. Summing over all $K \in \oh$, we obtain
$(u, A^*_h v )_\om + \ip{ D_h u, v}_h = (f,v)_\om$ for all $v$ in
$W_h^*$.  
Let  
\begin{equation}
  \label{eq:Q}
  Q = \{ r \in (W_h^*)': \; \text{ there is a $v \in V$ such that }
  r = D_h v\}.   
\end{equation}
Setting $D_h u$ to be a new unknown $q$ in $Q$, we have thus
derived the following weak formulation with $F(v) = (f, v)_\om$.

\begin{problem} \label{prb:weak}{\em 
  Given  any $F \in (W_h^*)',$\,  find $u\in L^2(\om)^m$
  and $q \in Q$ such that
  \[
    \label{eq:prb}
  (u, A_h^* v)_\om + \ip{ q,v }_h = F(v),
  \qquad
  \forall v \in W_h^*.    
  \]}
\end{problem}

\begin{theorem}
  \label{thm:wellposed-mesh}
  In the setting of~\eqref{eq:A1}, \eqref{eq:A3} and \eqref{eq:A2}, suppose 
  \begin{align}
    \label{eq:asm:1}
    & V  = \lprp{D^*(V^*)}, \text { and } 
    \\
    \label{eq:asm:2} 
    &A: V \to L^2(\om)^l \text { is a bijection}.
  \end{align}
  Then, Problem~\ref{prb:weak} is well posed.  Moreover, if
  $F(v) = (f,v)_\om$ for some $f \in L^2(\om)^l,$ then the
  unique solution~$u$ of Problem~\ref{prb:weak} is in~$V,$ 
  solves~\eqref{eq:bvp}, and satisfies $q= D_hu$.
\end{theorem}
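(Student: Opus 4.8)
The plan is to show that the bounded operator $B : L^2(\om)^m \times Q \to (W_h^*)'$ induced by $b$, i.e.\ $\ip{B(u,q),v}_h = b((u,q),v)$, is an isomorphism, by verifying the Banach--Ne\v{c}as--Babu\v{s}ka conditions: boundedness of $b$, an inf-sup lower bound, and nondegeneracy in the test variable. Boundedness is routine, since $|(u,A_h^* v)_\om| \le \|u\|_\om\|v\|_{W_h^*}$ while $|\ip{q,v}_h| \le \sqrt2\,\|q\|_Q\|v\|_{W_h^*}$ (writing $q = D_h z$, using $|\ip{D_h z,v}_h| = |(A_h z,v)_\om - (z,A_h^* v)_\om| \le \sqrt2\,\|z\|_W\|v\|_{W_h^*}$, and infimizing over such $z$). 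I would dispose of the \emph{moreover} part and existence for $L^2$ data at once: given $F(v) = (f,v)_\om$, assumption~\eqref{eq:asm:2} provides a unique $u \in V$ with $Au=f$; setting $q = D_h u$ and using $A_h u = Au$, a one-line cancellation gives $b((u,q),v) = (A_h u, v)_\om = (f,v)_\om = F(v)$, so $(u,D_h u)$ solves the problem and, by the uniqueness below, is the solution.

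For nondegeneracy, suppose $b((u,q),v)=0$ for all $(u,q)$. Taking $q=0$ forces $A_h^* v = 0$, and then $u=0$ gives $\ip{q,v}_h=0$ for all $q \in Q$; writing $q = D_h z$ with $z \in V$ and using $A_h^* v = 0$ reduces this to $(Az,v)_\om = 0$ for all $z \in V$. Since $A : V \to L^2(\om)^l$ is surjective by~\eqref{eq:asm:2}, $Az$ exhausts $L^2(\om)^l$, whence $v=0$. The heart of the argument is the inf-sup bound for the energy norm $\|(u,q)\|_E := \sup_{v \in W_h^*} |b((u,q),v)|/\|v\|_{W_h^*}$. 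I would first upgrade~\eqref{eq:asm:2}, via the adjoint relations $V^* = \lprp{D(V)}$ (Lemma~\ref{lem:V1}) and $V = \lprp{D^*(V^*)}$ (assumption~\eqref{eq:asm:1}) together with the closed-range theorem, to the statement that $A^* : V^* \to L^2(\om)^m$ is an isomorphism. The component $\|u\|_\om$ is then detected by \emph{conforming} test functions: for $v \in V^* \subseteq W_h^*$ one has $\ip{q,v}_h = \ip{Dz,v}_W = 0$ by Lemma~\ref{lem:1} and $V^* = \lprp{D(V)}$, so $b((u,q),v) = (u,A^* v)_\om$; choosing $v \in V^*$ with $A^* v = u$ gives $\|(u,q)\|_E \ge c_1\|u\|_\om$.

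It remains to detect $\|q\|_Q$, and here lies the main obstacle. Writing $\ip{q,v}_h = b((u,q),v) - (u,A_h^* v)_\om$ yields $\|q\|_{(W_h^*)'} \le \|(u,q)\|_E + \|u\|_\om$, so once I have the norm equivalence $\|q\|_{(W_h^*)'} \ge c_2\|q\|_Q$ on $Q$, I can eliminate $\|u\|_\om$ using the previous bound to close the inf-sup, which with the nondegeneracy above delivers injectivity, closed range, surjectivity of $B$, and the stability estimate. This equivalence is exactly the assertion that $D_h|_V : V \to (W_h^*)'$ has closed range, i.e.\ that $Q = \Vsp$. The structural input I would use is the compatibility identity $\lprp{Q} = V^*$ inside $W_h^*$: the inclusion $V^* \subseteq \lprp{Q}$ follows from Lemma~\ref{lem:1} and $V^* = \lprp{D(V)}$, while for the reverse I would test a candidate $v \in W_h^*$ against $q = D_h z$ with $z \in \D(\om)^m \subseteq V$ to first force $v \in W^*$ globally with $A^* v = A_h^* v$, and then invoke $V^* = \lprp{D(V)}$ once more. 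Since $W_h^*$ is Hilbert, the bipolar theorem turns $\lprp{Q} = V^*$ into $\overline{Q} = \Vsp$; the genuinely technical remaining step is to promote this to $Q = \Vsp$. I expect this to require the closed-range theorem for $D_h|_V$, reduced by element-by-element solvability of local adjoint problems to a bounded right inverse of the jump operator, after which the open mapping theorem supplies the norm equivalence and the inf-sup is assembled.
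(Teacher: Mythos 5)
Your BNB skeleton is sound, and three of its four pieces are correct and essentially reproduce the paper's \emph{second} proof: the nondegeneracy argument (take $q=0$ to force $A_h^*v=0$, then $u=0$ plus surjectivity of $A$ to force $v=0$), the upgrade of~\eqref{eq:asm:2} via the Closed Range Theorem to bounded-below bijectivity of $A^*:V^*\to L^2(\om)^m$, and the detection of $\|u\|_\om$ by conforming test functions $v\in V^*$ with $A^*v=u$ (this is precisely~\eqref{eq:infsupvolume}, with $Y_0=V^*$ as in Lemma~\ref{lem:unbroken}). The \emph{moreover} part is also dispatched correctly.

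The genuine gap is exactly where you flag it: the norm equivalence $\|q\|_Q\le C\,\|q\|_{(W_h^*)'}$, equivalently the closedness of $Q$ in $(W_h^*)'$. You correctly reduce the trial-side inf-sup to this statement, correctly obtain $\lprp{Q}=V^*$ and hence $\overline{Q}=\Vsp$ by the bipolar theorem, but then only \emph{expect} the promotion $\overline{Q}=Q$ to follow from ``element-by-element solvability of local adjoint problems'' and a ``bounded right inverse of the jump operator.'' No such tools are available under~\eqref{eq:A1}, \eqref{eq:A3}, \eqref{eq:A2} alone: functionals of the form $D_h v$ are not functions on the mesh skeleton, and a right inverse of a jump operator presupposes a trace theory for the graph spaces $W(K)$, which does not exist at this level of generality (avoiding it is a stated point of the paper). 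This unproven equivalence is the heart of the theorem; it is exactly the content of Lemma~\ref{lem:infsupinterface}, which the paper proves not locally but by a global duality argument: the supremum defining $\|q\|_{(W_h^*)'}$ is attained at $\ut_q\in W_h^*$ solving $(A_h^*\ut_q,A_h^*y)_\om+(\ut_q,y)_\om=-\ip{q,y}_h$ for all $y\in W_h^*$; one then checks that $u_q:=A_h^*\ut_q$ satisfies $A_h^*A_hu_q+u_q=0$ and $D_hu_q=q$, lies in $V$ (this is where~\eqref{eq:asm:1} enters), and coincides with the minimizer defining the quotient norm, giving $\|q\|_Q=\|\ut_q\|_{W_h^*}=\|q\|_{(W_h^*)'}$ with constant exactly one. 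Alternatively, you could keep your correct steps and discard the problematic one by flipping the BNB orientation, as in the paper's \emph{first} proof: prove injectivity of the trial operator (if $b((u,q),v)=0$ for all $v$, then $\ip{q,v}_h=0$ for $v\in V^*$, hence $Au=0$, so $u\in W$, then $u\in\lprp{D^*(V^*)}=V$, so $u=0$ and $q=0$), and prove surjectivity by the explicit test-side construction: given $w\in W_h^*$, solve $Av=w$ with $\|v\|_W\le C\|w\|_{W_h^*}$ (bijectivity plus open mapping), and observe $b((v+A_h^*w,\,D_hv),w)=\|w\|_{W_h^*}^2$. That route requires no control of $\|q\|_Q$ by the dual norm at all.
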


Before we prove this theorem, we must note how our assumptions allow a
natural topology on $Q$.  Specifically, \eqref{eq:asm:1} implies that
$V$ is a closed subspace of $W$. It is also a closed subspace of~$W_h$
since $W$ is continuously embedded in~$W_h$.  The same embedding also
shows that the restriction of $D_h$ to $V$, denoted by
$D_{h,V} : V \to (W_h^*)'$, is continuous.  Note that $Q$ is the range
of $D_{h,V}$.  For any $r$ in $Q$, we use $D_{h,V}^{-1} (\{ r \})$ to
denote the pre-image of $r$, i.e., set of all $v \in V$ such that
$r = D_h v.$ The continuity of $D_{h,V}$ implies that
$D_{h,V}^{-1}( \{ 0\} )$ is a closed subspace of $V$.  Hence
\begin{equation}
  \label{eq:Qnorm}
\| q\|_Q = \inf_{ v \in D_{h,V}^{-1}(\{ q \})  } \| v \|_W
\end{equation}
is a norm on $Q$. This quotient norm makes $Q$ complete. The
wellposedness result of Theorem~\ref{thm:wellposed-mesh} is to be
understood with $Q$ endowed with this norm.

\subsection{A proof of wellposedness}

We now give a proof of Theorem~\ref{thm:wellposed-mesh}.  Recall that
the right annihilator of any subspace $S \subseteq X$ is defined by
$ \rprp S = \{ w'\in W': \ip{ w',s}_W=0\text{ for all } s\in S\}.$ The
next lemma is used below to prove uniqueness.

\begin{lemma}
  \label{lem:DhV}
  If~\eqref{eq:asm:1} holds, then 
  $
  D_h V \subseteq \rprp{ (V^*)}.
  $
\end{lemma}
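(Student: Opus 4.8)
The plan is to unwind the definitions of both sides and reduce the claim to the hypothesis \eqref{eq:asm:1}. Recall that the statement $D_h V \subseteq \rprp{(V^*)}$ means: for every $w \in V$, the functional $D_h w \in (W_h^*)'$ annihilates every element of $V^*$, i.e., $\ip{D_h w, v^*}_h = 0$ for all $v^* \in V^*$. So I would fix an arbitrary $w \in V$ and an arbitrary $v^* \in V^*$ and aim to show this pairing vanishes.

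First I would use Lemma~\ref{lem:1} to replace the mesh-dependent pairing with the global one: since $w \in V \subseteq W$ and $v^* \in V^* \subseteq W^*$, we have $\ip{D_h w, v^*}_h = \ip{D w, v^*}_{W^*}$. This is the step that moves the problem out of the broken-up product space $W_h^*$ and into the single-domain setting where the adjoint operator $D^*$ is available. The key observation is then the symmetry relating $D$ and $D^*$: comparing the defining formulas \eqref{eq:D} and \eqref{eq:D*}, one sees that $\ip{D w, v^*}_{W^*} = (Aw, v^*)_\om - (w, A^* v^*)_\om$, while $\ip{D^* v^*, w}_{W} = (A^* v^*, w)_\om - (v^*, A w)_\om$, so that $\ip{D w, v^*}_{W^*} = -\overline{\ip{D^* v^*, w}_{W}}$ (the two pairings are complex-conjugate negatives of one another). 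Thus $\ip{D w, v^*}_{W^*} = 0$ if and only if $\ip{D^* v^*, w}_{W} = 0$.

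Now I would invoke the hypothesis \eqref{eq:asm:1}, namely $V = \lprp{D^*(V^*)}$. By the definition of the left annihilator, $w \in \lprp{D^*(V^*)}$ means precisely that $\ip{D^* v^*, w}_{W} = 0$ for every $v^* \in V^*$. Since $w \in V$, the hypothesis guarantees exactly this. Chaining the equalities backward, $\ip{D_h w, v^*}_h = \ip{D w, v^*}_{W^*} = 0$ for all $v^* \in V^*$, which says $D_h w \in \rprp{(V^*)}$. As $w \in V$ was arbitrary, the inclusion $D_h V \subseteq \rprp{(V^*)}$ follows.

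The proof is essentially a bookkeeping argument, so there is no single deep obstacle; the one place demanding care is the bridge between $D$ and $D^*$. I would want to confirm the conjugation/sign convention in \eqref{eq:D}--\eqref{eq:D*} carefully, because the annihilator condition only needs the pairing to \emph{vanish}, and vanishing is preserved under both negation and complex conjugation. Hence even if the precise relation carries a sign or a conjugate, the conclusion $\ip{D w, v^*}_{W^*}=0 \iff \ip{D^* v^*, w}_W = 0$ is robust, and the argument goes through unchanged.
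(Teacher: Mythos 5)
Your proof is correct and follows essentially the same route as the paper's: pass from the broken pairing to the unbroken one via Lemma~\ref{lem:1}, use the conjugate-negative relation between the $D$ and $D^*$ pairings, and then apply the definition of the left annihilator together with hypothesis~\eqref{eq:asm:1}. Nothing further is needed.
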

\begin{proof}
  Let $w \in V \subseteq W_h$.  Then for any $\vt \in V^*$, the
  functional $D_h w \in (W_h^*)'$ satisfies
  $\ip{ D_hw , \vt}_h = \ip{ D w, \vt}_{W^*}$ by Lemma~\ref{lem:1}. But
  $\ip{ D w, \vt}_{W^*}= -\overline{\ip{D^* \vt, w}}_W = 0$ 
  since \eqref{eq:asm:1} shows that 
  $w \in \lprp{D^*(V^*)}$. Hence $D_h w \in \Vsp$.
\end{proof}

\begin{proof}[Proof of Theorem~\ref{thm:wellposed-mesh}]
  We verify the uniqueness and inf-sup conditions of the Babu\v{s}ka
  theory to obtain wellposedness.
  To verify the uniqueness condition, we must prove that if
  \begin{equation}
    \label{eq:12}
    (u, A_h^* v)_{\om} + \ip{ q, v}_h = 0, \qquad \forall v \in W_h^*,
  \end{equation}
  then $u$ and $q$ vanishes.  Since $q = D_h z$ for some $z \in V$, by
  virtue of Lemma~\ref{lem:DhV}, $\ip{ q, v}_h =0$ for any $v$ in
  $V^*$. Hence~\eqref{eq:12} implies
  \begin{equation}
    \label{eq:5}
    (u, A_h^* v)_\om=0, \qquad \forall v \in V^*.
  \end{equation}
  In particular, since $\D(\om)^l \subseteq V^*$, this implies that
  $Au =0$ and therefore $u \in W$. Hence~\eqref{eq:D} and~\eqref{eq:5}
  imply $\ip{ Du, v}_{W^*} = 0$, or equivalently $\ip{ D^* v,u}_W=0$
  for all $v \in V^*$. Thus $u\in \lprp{D^*(V^*)}=V$. 
  The bijectivity of
  $A: V \to L^2(\om)^l $ then implies that $u=0$.  Returning
  to~\eqref{eq:12} and setting $u=0$, we see that
  $\ip{ q,v}_h=0$ for all $v\in W_h^*$, so $q=0$ as well.
  
  It only remains to prove the inf-sup condition 
  \begin{equation}
    \label{eq:inf-sup}
    \| w \|_{W_h^*} 
    \le
    C_1 \sup_{ 0 \ne x\in X } 
    \frac{ |b(x,  w) |}
    {\;\|x\|_X},
  \end{equation}
  where $X = L^2(\om)^m \times Q$ and
  $b((u,q), w) = (u, A_h^* w)_\om + \ip{ q,w }_h$.  Given any
  $w \in W_h^* \subseteq L^2(\om)^l$, we use the bijectivity of
  $A: V \to L^2(\om)^l$ and the Banach Open Mapping theorem 
  to obtain a $v$ in $V$ satisfying $ A v = w,$
  and $\| v \|_W \le C \| w \|_{W_h^*}.$ Then, setting $z = v + A_h^* w$ and
  $r = D_h v,$ we have $\| r \|_Q \le \| v \|_W\le C \| w \|_{W_h^*}$ and
  $\| z\|_\om \le (C+1) \| w \|_{W_h^*}$. Hence
  \begin{align*}
    \| w \|_{W_h^*}^2 
    &= ( A_h v, w)_\om + (A^*_h w,A^*_h w)_\om
      = (v+ A^*_h w, A^*_h w)_\om + \ip{ D_h v,w}_h
    \\
    &  = 
      \| (z, r)\|_{X}\;
      \frac{ b((z, r), \, w) }
      { \| (z,  r)\|_{X}  }
      \;\le
      \;C_1
      \| w\|_{W_h^*} \;
      \sup_{ 0 \ne x\in X } 
      \frac{ |b(x,  w) |}
      {\;\|x\|_X},
  \end{align*}
  where $C_1$ depends only on $C$. Hence~\eqref{eq:inf-sup} follows.
\end{proof}

\begin{remark}
  Various elements of the arguments used in this proof are well-known
  in the DPG literature -- see e.g.,
  \cite[\S~6.2]{DemkoGopalMuga11a}. A generalization of these ideas to
  make a unified theory for DPG approximations of all Friedrichs
  systems was attempted in~\cite{Bui-TDemkoGhatt13}.  However,
  \cite[equation~(2.17)]{Bui-TDemkoGhatt13} is not correct (a
  counterexample is easily furnished by the Laplace example) and
  unfortunately that equation is used in \cite[Lemma~2.4 and Corollary
  2.5]{Bui-TDemkoGhatt13} to prove the existence of a solution for
  Problem~\ref{prb:weak}. The above proof provides a corrigendum
  to~\cite{Bui-TDemkoGhatt13} and shows that the results claimed there
  for symmetric Friedrichs systems are indeed correct for operators of
  the form~\eqref{eq:A1} with $k=1$ and with $V$ and $V^*$ set
  respectively to the null spaces of the operators $B-M$ and $B+M^*$
  defined there.
\end{remark}
\begin{remark}
  The above analysis is applicable beyond Friedrichs systems as the
  example of Schr\"odinger equation shows.  ``Instead of working with
  one equation of higher than first order,'' writes Friedrichs in his
  early work~\cite{friedrich1}, ``we prefer to work with a system of
  equations of first order.''  We have already noted the difficulties
  in reformulating the Schr\"odinger equation as a first order system.
  The modern theory of Friedrichs systems (for operators of the
  form~\eqref{eq:A1} with $l=m$) starts with the assumption that
  $ \| (A + A^*) \phi \|_\om \le C \| \phi\|_\om$ for all
  $\phi \in \D(\om)^l$ -- see \cite[equation (T2)]{guermond1}.
  This assumption does not hold for the Schr\"odinger operator.
\end{remark}

\subsection{An alternate proof of wellposedness}

Another proof of Theorem~\ref{thm:wellposed-mesh} can be given using
the following two lemmas.

\begin{lemma}
  \label{lem:unbroken}
  $
  V^* = \{ y \in W_h^* : \; \ip{q, y}_h=0 \text{ for all } q \in
  Q\}.
  $
\end{lemma}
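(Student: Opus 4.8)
The plan is to establish the two set inclusions separately, noting first that $V^* \subseteq W^* \subseteq W_h^*$, so the asserted identity is between subsets of $W_h^*$. The inclusion $V^* \subseteq \{ y \in W_h^* : \ip{q,y}_h = 0 \text{ for all } q \in Q\}$ is essentially immediate from Lemma~\ref{lem:DhV}. Indeed, given $y \in V^*$ and $q \in Q$, the definition~\eqref{eq:Q} of $Q$ lets me write $q = D_h v$ for some $v \in V$, and the chain of equalities in the proof of Lemma~\ref{lem:DhV}---namely $\ip{D_h v, y}_h = \ip{Dv, y}_{W^*} = -\overline{\ip{D^* y, v}}_W = 0$, the last step by~\eqref{eq:asm:1}---shows $\ip{q,y}_h = 0$. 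So the substance of the lemma lies in the reverse inclusion, which I would prove in two steps.

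\emph{Step 1: from the broken space to the unbroken space.} Suppose $y \in W_h^*$ satisfies $\ip{D_h v, y}_h = 0$ for all $v \in V$. The key idea is to first test only against $v \in \D(\om)^m$, which is admissible since $\D(\om)^m \subseteq \dom(A) = V$ by~\eqref{eq:A2}. Such $v$ are globally smooth with no interelement jumps, so $A_h v = A v$, and expanding the broken pairing by~\eqref{eq:Dh} gives $0 = \ip{D_h v, y}_h = (Av, y)_\om - (v, A_h^* y)_\om$. Moving $A$ off of $v$ distributionally, $(Av, y)_\om = (v, A^* y)_\om$, where $A^* y \in \D'(\om)^m$ is the global distributional formal adjoint; hence $(v, A^* y - A_h^* y)_\om = 0$ for every $v \in \D(\om)^m$, with the pairing understood distributionally. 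Because $y \in W_h^*$ guarantees $A_h^* y \in L^2(\om)^m$, this identity forces the distribution $A^* y$ to coincide with the $L^2$-function $A_h^* y$. In particular, the skeleton contributions that distinguish the broken space $W_h^*$ from the unbroken space $W^*$ must vanish, and I conclude $y \in W^*$ with $A^* y = A_h^* y$.

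\emph{Step 2: membership in $V^*$.} Now that $y \in W^*$, I would feed general $v \in V \subseteq W$ back into the hypothesis. Since $v \in W$ and $y \in W^*$, Lemma~\ref{lem:1} applies and yields $\ip{D_h v, y}_h = \ip{Dv, y}_{W^*}$; as the left-hand side vanishes for every $v \in V$, I obtain $\ip{Dv, y}_{W^*} = 0$ for all $v \in V$, i.e.\ $y \in \lprp{D(V)}$. By Lemma~\ref{lem:V1}, $\lprp{D(V)} = V^*$, so $y \in V^*$, completing the reverse inclusion and the proof.

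I expect Step~1 to be the crux. It is exactly there that the defining feature of the broken test space is exploited: orthogonality to all of $Q = D_h(V)$, when probed by interior bump functions in $\D(\om)^m$, annihilates the interelement jump terms and thereby upgrades $y$ from $W_h^*$ to $W^*$. The remaining manipulations are routine invocations of Lemmas~\ref{lem:1} and~\ref{lem:V1}, the only care being to keep the conjugations in the various duality pairings consistent.
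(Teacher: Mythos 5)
Your proof is correct, and for the substantive (reverse) inclusion it follows exactly the paper's route: test with $\phi \in \D(\om)^m \subseteq \dom(A) = V$ to identify the distribution $A^* y$ with the $L^2$ function $A_h^* y$, conclude $y \in W^*$, and then use Lemma~\ref{lem:1} together with Lemma~\ref{lem:V1} to place $y$ in $\lprp{D(V)} = V^*$. The only deviation is in the easy forward inclusion, and it carries a small cost: you route it through Lemma~\ref{lem:DhV}, i.e.\ through the chain $\ip{D_h v, y}_h = \ip{D v, y}_{W^*} = -\overline{\ip{D^* y, v}}_W = 0$, whose last step invokes hypothesis~\eqref{eq:asm:1}. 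The lemma, however, is stated (and proved in the paper) without assuming~\eqref{eq:asm:1}: since Lemma~\ref{lem:V1} gives $V^* = \lprp{D(V)}$ under only the standing assumptions \eqref{eq:A1}, \eqref{eq:A3}, \eqref{eq:A2}, membership $y \in V^*$ already yields $\ip{D v, y}_{W^*} = 0$ for all $v \in V$, so the forward inclusion needs no appeal to~\eqref{eq:asm:1} at all. In the context where the lemma is used (the second proof of Theorem~\ref{thm:wellposed-mesh}) assumption~\eqref{eq:asm:1} is in force, so nothing breaks downstream; but as a proof of the lemma as stated, yours establishes only a conditional version, and the fix is the one-line substitution just indicated.
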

\begin{proof}
  If $y \in V^*,$ then for any $z \in V,$ using Lemmas~\ref{lem:1}
  and~\ref{lem:V1}, we have $\ip{D_h z, y}_h  = \ip{D z, y}_{W^*} =0,$ 
  i.e., $\ip{q, y}_h=0$ for all $q \in Q.$

  To prove the reverse containment, let $y \in W_h^*$ satisfy
  $\ip{D_h z, y}_h=0$ for all $z \in V.$ For any $\phi \in \D(\om)^m$,
  the distribution $A^* y$ satisfies
  $ \ip{ A^* y, \phi}_{\D(\om)^m} = ( y, A \phi)_\om = (A_h^* y,
  \phi)_\om + \overline{\ip{D_h \phi, y}}_h$.
  The last term is zero, because by~\eqref{eq:A2},
  $\D(\om)^m \subseteq \dom(A) = V$. Hence $A^* y = A_h^* y$ and $y$
  is in $W^*$. Thus by Lemma~\ref{lem:1},
  $\ip{ D_h z, y}_h = \ip{ D z, y}_W=0$, so $y \in \lprp{D(V)}$. Hence
  $y$ is in $V^*$ by Lemma~\ref{lem:V1}.
\end{proof}

\begin{lemma}
  \label{lem:infsupinterface}
  Suppose~\eqref{eq:asm:1} holds. Then, 
  for all $q \in Q$, 
  \[
  \inf_{ v \in D_{h,V}^{-1}(\{ q \})  } \| v \|_W
  \; = \sup_{0 \ne y \in W_h^*}  
  \frac{ | \ip{ q, y}_h| }
       { \quad\| y\|_{W_h^*}}.
  \]
\end{lemma}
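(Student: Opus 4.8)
The plan is to prove the asserted identity as two opposing inequalities, after recasting the pairing $\ip{\cdot,\cdot}_h$ as a genuine inner product on a product space. Set $H = L^2(\om)^l \times L^2(\om)^m$ with its natural inner product $((a,b),(a',b'))_H = (a,a')_\om + (b,b')_\om$, and define $\Phi : W \to H$ and $\Psi : W_h^* \to H$ by $\Phi(v) = (A_h v, v)$ and $\Psi(y) = (y, -A_h^* y)$. These are isometric: $\|\Phi(v)\|_H = \|v\|_W$ for $v \in W$ and $\|\Psi(y)\|_H = \|y\|_{W_h^*}$ for $y \in W_h^*$. The definition~\eqref{eq:Dh} then reads $\ip{D_h v, y}_h = (\Phi(v), \Psi(y))_H$, so that, writing $V_\Phi = \Phi(V)$ and $Y_\Psi = \Psi(W_h^*)$ (both closed subspaces of $H$, being isometric images of Hilbert spaces), the quantity to be analyzed becomes purely geometric in $H$.

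The easy inequality ($\ge$) comes directly from Cauchy--Schwarz in $H$: for any lifting $v \in V$ with $D_h v = q$ and any $y \neq 0$, $|\ip{q,y}_h| = |(\Phi(v),\Psi(y))_H| \le \|\Phi(v)\|_H\,\|\Psi(y)\|_H = \|v\|_W\,\|y\|_{W_h^*}$, so dividing by $\|y\|_{W_h^*}$ and taking the infimum over liftings gives $\sup_{y\neq 0} |\ip{q,y}_h|/\|y\|_{W_h^*} \le \|q\|_Q$. For the reverse inequality I would fix one lifting $v_0 \in V$ of $q$ and note that $\sup_{y\neq 0} |\ip{q,y}_h|/\|y\|_{W_h^*} = \|P_{Y_\Psi}\Phi(v_0)\|_H$, where $P_{Y_\Psi}$ is the orthogonal projection of $H$ onto $Y_\Psi$; likewise $\|q\|_Q = \|\Phi(v^*)\|_H$, where $v^* = v_0 - P^{W}_{K} v_0$ is the minimal-norm lifting and $K = \ker D_{h,V}$. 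Since any two liftings of $q$ differ by an element of $\ker D_h$, equality will follow once I show that $\pi := P_{Y_\Psi}\Phi(v_0)$ lies in $V_\Phi$, equivalently that the minimal-norm lifting satisfies $\Phi(v^*) \in Y_\Psi$.

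The crux is therefore to identify $Y_\Psi^\perp$ and locate $\Phi(v^*)$ relative to it. Computing the complement, $(a,b) \in Y_\Psi^\perp$ means $(a,y)_\om = (b, A_h^* y)_\om$ for all $y \in W_h^*$; testing element by element first with $y \in \D(K)^l$ and then with general $y \in W^*(K)$ and invoking the definition of $D_K$ shows this is equivalent to $b \in \ker D_h$ and $a = A_h b$, so $Y_\Psi^\perp = \{\Phi(b) : b \in \ker D_h\}$. Next I would use Lemma~\ref{lem:1} together with~\eqref{eq:asm:1} (via Lemma~\ref{lem:V1}) to prove $\ker D_h \subseteq V$, whence $\ker D_h = K$. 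Because $W$ and $W_h$ induce the same inner product on $W \supseteq V$, the minimizer $v^*$ is $W_h$-orthogonal to $K = \ker D_h$, so $\Phi(v^*) \perp \Phi(\ker D_h) = Y_\Psi^\perp$, i.e.\ $\Phi(v^*) \in Y_\Psi$. Hence $\Phi(v^*) = \Psi(y^*)$ for some $y^* \in W_h^*$ (concretely $y^* = A v^*$, with $A_h^* A v^* = -v^*$), and this $y^*$ saturates Cauchy--Schwarz: $\ip{q,y^*}_h = \|y^*\|_{W_h^*}^2$ while $\|v^*\|_W = \|y^*\|_{W_h^*}$, so that $|\ip{q,y^*}_h|/\|y^*\|_{W_h^*} = \|v^*\|_W = \|q\|_Q$, closing the argument. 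The main obstacle is precisely this geometric step --- establishing $Y_\Psi^\perp = \Phi(\ker D_h)$ and the inclusion $\ker D_h \subseteq V$ --- since that is exactly what forces the orthogonal projection of a lifting onto the broken test space to remain a $\Phi$-image of an element of $V$, hence itself an admissible lifting realizing the infimum.
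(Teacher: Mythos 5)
Your argument is correct, but it reaches the identity by a genuinely different route than the paper. The paper works from the supremum side: it introduces the Riesz representer $\tilde u_q \in W_h^*$ of the functional $y \mapsto -\ip{q,y}_h$, converts that variational equation by distributional testing into the strong system $A_h A_h^*\tilde u_q + \tilde u_q = 0$, $D_h A_h^*\tilde u_q = q$, sets $u_q = A_h^*\tilde u_q$ (so that $A_h^* A_h u_q + u_q = 0$ and $D_h u_q = q$), proves $u_q \in V$ by pairing with $V^*$ and invoking~\eqref{eq:asm:1}, and finally shows that $u_q$ coincides with the minimizer of the quotient norm, whence both extrema equal $\| u_q\|_W = \|\tilde u_q\|_{W_h^*}$. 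You instead work from the infimum side, recasting everything as geometry in $H = L^2(\om)^l \times L^2(\om)^m$: Cauchy--Schwarz gives $\sup \le \inf$ at once, and the reverse inequality reduces to the single structural fact $Y_\Psi^\perp = \Phi(\ker D_h)$ together with $\ker D_h \subseteq V$ --- your elementwise testing with $\D(K)^l$ and then $W^*(K)$ proves the first, and Lemma~\ref{lem:1} combined with~\eqref{eq:asm:1} proves the second (this is precisely the place where the paper, too, spends~\eqref{eq:asm:1}, there to put $u_q$ into $V$). Orthogonality of the minimal-norm lifting $v^*$ to $\ker D_h$ then forces $\Phi(v^*) \in Y_\Psi$, and $y^* = A v^*$ saturates Cauchy--Schwarz. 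The extremizers are of course the same objects ($v^* = u_q$ and $y^* = -\tilde u_q$), but your route avoids solving the auxiliary variational problem for the supremum and avoids the final coincidence-of-minimizers step, at the cost of losing the explicit characterization of the maximizing test function as the solution of an adjoint-graph problem --- the feature of the paper's proof that mirrors how optimal DPG test functions are computed in practice. Two details to nail down when writing this up: closedness of $V$ in $W$, which you need both for $V$ to be complete (so $\Phi(V)$ is closed) and for the existence of the $W$-orthogonal projection onto $\ker D_{h,V}$ defining $v^*$, follows from~\eqref{eq:asm:1} because left annihilators are closed; and the trivial case $q=0$ should be dispatched separately, since then $y^* = Av^*$ vanishes and cannot be inserted into the supremum quotient (both sides being zero there).
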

\begin{proof}
  The supremum, denoted by $s$,
  is attained by the function $\ut_q$ in $W_h^*$ satisfying
  \begin{gather}
    \label{eq:supuq}
      (A_h^* \ut_q, A_h^* y)_\om + ( \ut_q, y)_\om = -\ip{ q, y}_h,
      \qquad \forall y \in W_h^*, \text { and }
      \\ 
    \label{eq:supnorm}
    s = \| \ut_q \|_{W_h^*}.  
  \end{gather}
  Choosing $y \in \D(\om)^l$ in~\eqref{eq:supuq}, we conclude that the
  distribution $A (A_h^* \ut_q)$ is in
  $L^2(\om)^l$. Hence~\eqref{eq:Dh} is applicable with
  $w = A_h^* \ut_q$ and we obtain
  \begin{subequations}
    \label{eq:AAT}
    \begin{align}
      \label{eq:AAT0}
      A_h A_h^* \ut_q + \ut_q & = 0 
      \\
      \label{eq:AAT1}
      D_h A_h^* \ut_q & = q.
    \end{align}
  \end{subequations}
  Now let $u_q = A_h^* \ut_q$. Then~\eqref{eq:AAT0} implies
  $ A_h u_q =-\ut_q $, which implies
  $A_h^* A_h u_q =- A_h^* \ut_q= -u_q$.  Combining with
  \eqref{eq:AAT1}, we have
  \begin{subequations}
    \label{eq:ATA}
    \begin{align}
      \label{eq:ATA0}
      A_h^* A_h u_q + u_q  & =0
      \\
      D_h  u_q & = q. 
    \end{align}
  \end{subequations}
  Next, we show that $u_q$ is in $V$. By~\eqref{eq:asm:1}, it suffices
  to prove that $u_q \in \lprp{D^*(V^*)}$.  For any $\vt$ in~$V^*,$
  we have,  using  Lemma~\ref{lem:1}, 
  $ \ip{D^* \vt, u_q}_W 
  = 
  -\overline{\ip{D u_q, \vt}}_{W^*} 
  =
  -\overline{\ip{ D_h
      u_q, \vt}}_h =-\overline{\ip{ q, \vt}}_h$.
  The last term is zero because
  $q = D_h z$ for some $z \in V$ and
  $\ip{ q, \vt} = \ip{D z, \vt}_{W^*} = 0$ by
  Lemma~\ref{lem:V1}. Hence $u_q \in \lprp{D^*(V^*)} = V.$

  The infimum of the lemma is $\| q \|_Q.$ By virtue
  of~\eqref{eq:supnorm}, to complete the proof, it suffices to show
  that $\| q \|_Q = \| u_q \|_W = \| \ut_q \|_{W_h^*}.$ The last
  equality is obvious from $u_q= A_h^* \ut_q$ and $ A_h u_q =-\ut_q$,
  hence we need only show that $\| q \|_Q = \| u_q \|_W$. Standard
  variational arguments show that the infimum defining $\| q \|_Q$ is
  attained by a unique minimizer $v_q \in V$ satisfying $\| q \|_Q = \| v_q \|_W$,  
  $D_h v_q = q$
  and $ (A_h v_q, A_h v)_\om + (v_q, v)_\om = 0 $ for all
  $v \in D_{h,V}^{-1}(\{0\}).$
\rev{Choosing 
a $v$ in  $\D(K)$ (whose extension by zero is in 
 $D_{h,V}^{-1}(\{0\})$), we conclude that 
distribution $A^* (A_h v_q)|_K$ is in $L^2(K)^m$ for any $K \in \oh$. Therefore $A_h^* A_h v_q$ is in $L^2(\om)^m$.  In view
  of~\eqref{eq:ATA}, this means that $v_q = u_q$. }
\end{proof}

\begin{proof}[Second proof of Theorem~\ref{thm:wellposed-mesh}]
  According to \cite[Theorem~3.3]{carstensen1}, it suffices to prove
  that there are positive constants $c_0, \hat c$ such that
  \begin{align}
    \label{eq:infsupvolume}
    c_0 \| u \|_\om \le \sup_{0 \ne y \in Y_0}
    \frac{ | (u, A_h^* y)_\om |}{\| y\|_{W_h^*}}
    && \forall \;&  u \in L^2(\om)^m,
    \\
    \label{eq:infsupinterface}
  \hat c \,\| q \|_{Q} 
  \le \sup_{0 \ne y \in W_h^*} 
  \frac{ | \ip{ q, y}_h| }
       { \quad\| y\|_{W_h^*} }
    && \forall \;& q \in Q,
  \end{align}
  where
  $ Y_0 = \{ y \in W_h^* : \; \ip{q, y}_h=0 \text{ for all } q \in
  Q\}.$ 

  Since~\eqref{eq:infsupinterface} follows with $\hat c =1$ from
  Lemma~\ref{lem:infsupinterface}, we only need to
  prove~\eqref{eq:infsupvolume}. First note that since $V$ is closed
  (by~\eqref{eq:asm:1}), $A$ is a closed operator.
  By~\eqref{eq:asm:2}, the range of $A$ is closed.  By the Closed
  Range Theorem for closed operators, range of $A^*$ is closed.  Also,
  the well-known identity $\ker(A^*) = \rprp{\ran(A)}$, in combination
  with~\eqref{eq:asm:2}, implies that $A^*$ is injective. Hence there
  exists a $C>0$ such that
  \begin{equation}
    \label{eq:A'bddbelow}
    C \| y \|_\om \le \| A^* y\|_\om
    \qquad \forall  y \in \dom(A^*)  = V^*.
  \end{equation}
  This implies the following inf-sup condition:
    \[
    C \| y \|_{W^*} \le \sup_{u \in L^2(\om)^m} 
    \frac{ |(u, A^*y)_\om| }{ \| u \|_\om}
    \qquad \forall 
    y \in V^*.
    \] 
    To complete the proof, we note that by standard arguments the
    order of arguments in the inf and sup  may be reversed to get
    \[
      \inf_{u \in L^2(\om)^m} \sup_{y \in V^*} 
      \frac{ | (u, A^*y)_\om| }{\; \; \| u \|_\om \;\| y \|_{W^*}}
      \;=\;
      \inf_{y \in V^*} \sup_{u \in L^2(\om)^m}
      \frac{ | (u, A^*y)_\om| }{  \;\;\| u \|_{\om}\; \| y \|_{W^*}} 
      \ge C.
    \]
    By Lemma~\ref{lem:unbroken}, $Y_0 = V^*$, thus completing the
    proof of~\eqref{eq:infsupvolume}.
\end{proof}

\begin{remark}
  The idea behind Lemma~\ref{lem:infsupinterface} (to consider the two
  related problems~\eqref{eq:AAT} and \eqref{eq:ATA}, one with
  essential boundary conditions and the other with natural boundary
  conditions) was first presented in~\cite{carstensen1,
    CarstDemkoGopal15}, tailored to the specific needs of a Maxwell
  problem. A generalization for first order operators 
  was presented later in~\cite{Wiene16}.
  The argument to prove~\eqref{eq:infsupvolume} using 
  the Closed Range Theorem, was first presented for the case of
  first order Sobolev spaces
  in~\cite[Theorem~6.6]{carstensen1}.
\end{remark}

\bibliography{SchrodingerRef}
\bibliographystyle{siam}

\end{document}